\crefname{figure}{Figure}{Figures} 
\crefname{equation}{}{} 
\crefname{assumption}{Assumption}{Assumptions}
\crefname{subsection}{Subsection}{Subsections}
\crefname{open-prob}{Open Problem}{Open Problems}
\newcounter{cdrow}
\newtheorem{theorem}{Theorem}[]
\newtheorem*{theorem*}{Theorem}
\newtheorem{corollary}[theorem]{Corollary}
\newtheorem{lemma}[theorem]{Lemma}
\newtheorem{proposition}[theorem]{Proposition}
\newtheorem*{claim*}{Claim}
\newtheorem{open-prob}[theorem]{Open Problem}
\theoremstyle{definition}
\newtheorem{definition}[theorem]{Definition}
\newtheorem*{definition*}{Definition}
\theoremstyle{remark}
\newtheorem{remark}[theorem]{Remark}
\newtheorem{example}[theorem]{Example}
\newtheorem*{example*}{Example}
\DeclareMathOperator{\tr}{tr}
\def\tr{{\rm tr}}
\newcommand*{\op}{%
	\DOTSB
	\mathop{\vphantom{\bigoplus}\mathpalette\matt@op\relax}%
	\slimits@
}
\newcommand\matt@op[2]{%
	\vcenter{\m@th\hbox{\resizebox{\widthof{$#1\bigoplus$}}{!}{$\boxplus$}}}%
}
\newcommand{\R}{\mathbb{R}}
\newcommand{\N}{\mathbb{N}}
\renewcommand{\Im}{\mathrm{Im} \,}
\newcommand{\argmin}{\text{argmin}}
\newcommand{\TPT}[1]{ \R^{#1}/\R \mathbf{1}}
\newcommand{\dtr}{d_{\mathrm{tr}}}
\newcommand{\sgn}{\text{\sgn}}
\newcommand{\E}{\mathbb{E}}
\renewcommand{\vec}[1]{\mathbf{#1}}
\renewcommand{\ker}{\text{Ker}}
\DeclareMathOperator{\type}{type}
\def\@biblabel#1{}
\@citea\NAT@hyper@{%
		\NAT@nmfmt{\NAT@nm}%
		\hyper@natlinkbreak{\NAT@aysep\NAT@spacechar}{\@citeb\@extra@b@citeb}%
		\NAT@date}}
\@citea\NAT@nmfmt{\NAT@nm}%
\NAT@spacechar\NAT@hyper@{\NAT@date}}{}{}
\@citea\NAT@hyper@{%
		\NAT@nmfmt{\NAT@nm}%
		\hyper@natlinkbreak{\NAT@spacechar\NAT@@open\if*#1*\else#1\NAT@spacechar\fi}%
		{\@citeb\@extra@b@citeb}%
		\NAT@date}}
\@citea\NAT@nmfmt{\NAT@nm}%
\fi\NAT@hyper@{\NAT@date}}
\begin{document}
	
	\def\spacingset#1{\renewcommand{\baselinestretch}%
		{#1}\small\normalsize} \spacingset{1}

	\begin{flushleft}
		{\Large{\textbf{Probability Metrics for Tropical Spaces of Different Dimensions}}}
		\newline
		\\
		Roan Talbut$^{1,\dagger}$, Daniele Tramontano$^{2}$, Yueqi Cao$^{1}$, Mathias Drton$^{2}$, and Anthea Monod$^{1}$
		\\
		\bigskip
		\bf{1} Department of Mathematics, Imperial College London, UK
		\\
		\bf{2} School of Computation, Information and Technology, Department of Mathematics, Technical University of Munich, Germany
		\\
		\bigskip
		$\dagger$ Corresponding e-mail: r.talbut21@imperial.ac.uk
	\end{flushleft}
	
	
	\section*{Abstract}
	
	The problem of comparing probability distributions is at the heart of many tasks in statistics and machine learning.  Established comparison methods treat the standard setting that the distributions are supported in the same space. Recently, a new geometric solution has been proposed to address the more challenging problem of comparing measures in Euclidean spaces of differing dimensions.  Here, we study the same problem of comparing probability distributions of different dimensions in the tropical setting, which is becoming increasingly relevant in applications involving complex data structures such as phylogenetic trees.  Specifically, we construct a Wasserstein distance between measures on different tropical projective tori---the focal metric spaces in both theory and applications of tropical geometry---via tropical mappings between probability measures.  We prove equivalence of the directionality of the maps, whether mapping from a low dimensional space to a high dimensional space or vice versa. As an important practical implication, our work provides a framework for comparing probability distributions on the spaces of phylogenetic trees with different leaf sets. We demonstrate the computational feasibility of our approach using existing optimisation techniques on both simulated and real data.
	
	\paragraph{Keywords:} Simple projection; tropical matrix maps; tropical projective torus; Wasserstein metrics; phylogenetic data.
	
	
	\section{Introduction}\label{sec:intro}
	
	Some of the most immediate and important tasks in statistics and machine learning---such as clustering \citep{irpino2008dynamic} and density estimation \citep{sheather2004density}---require measuring a distance between probability distributions.  This is usually done with a notion of distance or divergence between measures; some of the most well-known distances and divergences between probability distributions are the Kullback--Leibler divergence, total variation distance, and Hellinger distance.  These notions fall within the class of maps known in probability theory as \emph{$f$-divergences} \citep{csiszar1967information}. More recently, the Wasserstein distance between measures has become increasingly relevant in statistics as it metrises weak convergence of measures and reflects the geometry of the underlying state space \citep[e.g.,][]{panaretos2019statistical, panaretos2020invitation}. 
	
	An underlying assumption behind computing $f$-divergences and Wasserstein distances, however, is that the measures live in the same space.  Nevertheless, there exist many application settings where the measures to be compared occur in distinct spaces of different dimensions. For example, phylogenetic trees are used to capture the progression of cancer in a biomedical application or the evolutionary patterns of the spread of disease in a public health application. In such applications, the numbers of leaves in the trees can differ before and after intervention such as treatment or public inoculation, and therefore such datasets will lie on spaces of different dimensions. Existing work by \cite{ren2017combinatorial} and \cite{grindstaff2019representations} study the problem of comparing sets of trees with differing numbers of leaves in BHV tree space \citep{billera2001geometry}; here, we consider probability measures rather than discrete data sets in tropical tree space \citep{speyer2004grassmannian}.
	
	There is an inherent connection between the space of phylogenetic trees and tropical geometry \citep{speyer2004grassmannian}, which has recently been exploited to develop many statistical and machine learning tools for sets of phylogenetic trees \citep[e.g.,][]{barnhill2023tml, tang2020tropical, yoshida2019tropicalPCA, aliatimis2023tropical}.  The ambient space of phylogenetic trees as well as Gr\"{o}bner complexes---central objects in tropical geometry---is the \emph{tropical projective torus}, which makes it a fundamental space in tropical geometry for both theory and applications. In this work, we therefore study Wasserstein distances between probability distributions on tropical projective tori of different dimensions.
	
	Our strategy is to leverage recent work in the Euclidean setting by \cite{cai2022distances}, who construct a Wasserstein distance between Euclidean spaces of different dimensions. The optimal transport problem and thus Wasserstein distances are well defined and have been previously studied on the tropical projective torus by \cite{lee2022tropical}.  In this work, we build on these existing results and relevant foundations to construct a Wasserstein distance between measures on tropical projective tori of different dimensions. 
	We prove that there need not be a choice of whether to map from the lower dimensional torus to the higher dimensional one or vice versa, because the directional mappings are equivalent.
	
	The remainder of this manuscript is structured as follows.  We begin with an outline of the relevant background and concepts in \Cref{sec:background}.  We then characterise the tropical equivalents to the necessary tools in \Cref{sec:tropical_tools}, which leads to our main theorem and a discussion of practical implications for phylogenetic trees in \Cref{sec:main_thm}. \Cref{sec:computation} consists of numerical experiments for the computation of our proposed distance and a demonstration of our algorithm using real influenza data. We close with a discussion of future work in \Cref{sec:discussion}.
	
	Throughout this manuscript we use the term polyhedron to refer to a (possibly unbounded) finite intersection of closed half spaces of dimension $d$, in accordance with \cite{ziegler2012lectures}.
	
	
	\section{Background and Preliminaries}
	\label{sec:background}
	
	In this section, we present the tropical setting of our work, the Wasserstein distance definition, and an overview of the approach for mapping between probability distributions of different dimensions in the Euclidean case, which we will closely follow in our work.
	
	\emph{Notation.} Throughout this manuscript, we use the notation $[n] := \{1,2,\ldots,n\}$.
	
	\subsection{Essentials of Tropical Algebra and Geometry}
	
	We begin by outlining the basic concepts of tropical algebra and tropical geometry needed for our work; a complete introduction is given by \cite{maclagan2015introduction}.
	
	\subsubsection{Tropical Algebra and Phylogenetics}
	
	\begin{definition}[Tropical Algebra]
		The \emph{tropical algebra} is the semiring $\overline\R = \R \cup \{- \infty\}$ with the addition and multiplication operators---tropical addition and tropical multiplication, respectively---given by
		\begin{align*}
			a \boxplus b = \max \{ a,b \}, \quad a \odot b = a+b. 
		\end{align*}
		The additive identity is $-\infty$ and the multiplicative identity is $0$. Tropical subtraction is not defined; tropical division is given by classical subtraction.
	\end{definition}
	
	\begin{remark}
		Note that \cite{maclagan2015introduction} use the min-plus convention where tropical addition is given by the minimum between two elements, rather than the maximum as above.  While they are equivalent, we use the max-plus convention as this has been used more frequently in recent applications  \citep[e.g.,][]{yoshida2019tropicalPCA,yoshida2022hit,aliatimis2023tropical}.
	\end{remark}
	
	Using the tropical algebra, we can define tropical parallels to most algebraic objects.  For example, we can define tropical polynomials as the maximum of finitely many linear maps with integer coefficients. 
	Evaluating functions or other mathematical expressions using the tropical algebra is referred to as \emph{tropicalisation}.
	
	The metric space in which we work is the following.
	\begin{definition}[Tropical projective torus, Tropical metric]
		\label{def:tpt}
		The $n$-dimensional \emph{tropical projective torus} is a quotient space constructed by endowing $\R^{n+1}$ with the equivalence relation 
		\begin{equation}
			\label{eq:tpt_equiv}
			\mathbf{x} \sim \mathbf{y} \Leftrightarrow \mathbf{x} = a \odot \mathbf{y};
		\end{equation}
		it is denoted by $\TPT{n+1}$.  
		The generalised Hilbert projective metric, also referred to as the \emph{tropical metric}, is given by
		\[
		\dtr(\vec x,\vec y) = \max_i \{ x_i-y_i \} - \min_i \{ x_i - y_i \} = \max_{i,j} \{ x_i - y_i - x_j + y_j\},
		\]
		and is induced by the tropical norm
		\[
		\| x\|_{\tr} = \max_i x_i - \min_i x_i.
		\]
	\end{definition}
	
	As we are mapping between tropical projective tori of different dimensions, when necessary we will distinguish between metrics on different spaces by denoting the $\TPT{n}$ metric by $\dtr^n$. \Cref{fig:trop-ball} shows the tropical ball on the tropical projective plane, where we have normalised the first coordinate to embed into $\R^2$. We use this embedding, given more generally by
	\[
	\varphi: \TPT{n+1} \rightarrow \R^{n}, \quad (x_0, \dots, x_n) \mapsto (x_2-x_0, \dots, x_n-x_0),
	\]
	for all visualisation figures in this paper. 
	
	\begin{figure}[h!]
		\centering
		\begin{tikzpicture}
			\draw [->,thick] (0,-2) -- (0,2);
			\draw [->,thick] (-2,0) -- (2,0);
			\draw [fill = gray, opacity = 1/2](0,1) -- (1,1) -- (1,0) -- (0,-1) -- (-1,-1) -- (-1, 0) -- (0,1);
			
		\end{tikzpicture}
		\caption{The tropical ball under $\varphi: \TPT{3} \rightarrow \R^2$}
		\label{fig:trop-ball}
	\end{figure}

	\subsubsection*{Phylogenetic Trees}
	
	The space of \emph{phylogenetic trees} with $N$ leaves, which are metric $N$-trees with non-negative lengths on all edges, have an important connection to tropical geometry. It was proved by  \cite{speyer2004grassmannian} that the \emph{tropical Grassmannian}---the projective variety obtained by tropicalising the Grassmannian---is homeomorphic to the space of phylogenetic trees, and therefore the tropical projective torus is the ambient space of phylogenetic trees.  Phylogenetic trees are a fundamental data type in biology that model evolutionary processes in many settings, such as the evolution of species and disease, as well as the spread of pathogens. The tropical projective torus, therefore, is an important space in mathematical biology as the ambient space of phylogenetic trees.
	
	\subsubsection{Tropical Matrices}
	
	To respect the tropical structure of the tropical projective torus, we use tropical matrices to map probability distributions from one tropical projective torus to another. Here we outline the behaviour of tropical matrix maps and their connections to tropical convexity as first presented by \cite{develin2004tropical} and discussed in Chapter 5.2 of \cite{maclagan2015introduction}. The foundational objects of convexity analysis --- line segments and convex hulls --- are defined in the tropical setting as follows:
	
	\begin{definition}[Tropical line segment, Tropical convex hull]
		\label{def:tls_tch}
		For any two points $\vec a, \vec b \in \TPT{n}$, the \emph{tropical line segment} between $\vec a$ and $\vec b$ is the set
		$$
		\gamma_{\vec a \vec b} = \{ \alpha \odot \vec a \boxplus \beta \odot \vec b \mid \alpha, \beta \in \R\}
		$$
		with tropical addition taken coordinate-wise.
		
		For a finite subset $X = \{\vec x_1,\ldots, \vec x_r\} \subset \TPT{n}$, the \emph{tropical convex hull} of $X$ is the smallest subset containing $X$ where the tropical line segment between any two points in $X$ is contained in $X$; it is the set of all tropical linear combinations of $X$,
		$$
		\mathrm{tconv}(X) = \{ \alpha_1 \odot \vec x_1 \boxplus \alpha_2 \odot \vec x_2 \boxplus \cdots \boxplus \alpha_r \odot \vec x_r \mid \alpha_1, \ldots, \alpha_r \in \R\}.
		$$
	\end{definition}
	
	We will be studying tropical matrices $M\in\overline{\R}^{m\times n}$ with at least one real entry per row---a non-degeneracy condition which ensures $M$ maps into $\TPT{m}$. This condition is assumed through the rest of the manuscript. From the definition above, we can observe that the \emph{image} of a tropical matrix $M$ in $\TPT{m}$ is the tropical span of the columns of $M$; when $M$ has all real entries, its image in $\TPT{m}$ is the tropical convex hull of its columns \citep{maclagan2015introduction}.
	
	\begin{example}
		\label{ex:bdd_matrix_im}
		Let  $M := \begin{pmatrix} 2 & 0 & 0 \\ -2 & 2 & 1 \\ 1 & 3 & -1 \end{pmatrix}.$  Its image is given in \Cref{fig:bdd_matrix_im}.
		\begin{figure}[ht!]
			\centering
			\begin{tikzpicture}[scale = 0.7]
				\draw [fill = gray] (1,2) -- (2,3) -- (-2,-1) -- (-4,-1) -- (1,-1) -- cycle;
				\node (A) at (2,3) [right] {\scriptsize$(2,3)$};
				\node (B) at (1,-1) [right] {\scriptsize$(1,-1)$};
				\node (C) at (-4,-1) [below] {\scriptsize$(-4,-1)$};
			\end{tikzpicture}
			\caption{The image of $M$ from \Cref{ex:bdd_matrix_im} under $\varphi: \TPT{3} \rightarrow \R^2$.}
			\label{fig:bdd_matrix_im}
		\end{figure}
	\end{example}
	
	The local behaviour of tropical matrices is classically linear, but to understand the global behaviour of their images and fibres, we must understand the combinatoric structure of tropical matrices. This combinatoric information is given by the \emph{type} at a point $\vec x$, and dictates the local behaviour at $\vec x$.
	
	\begin{definition}
		\label{def:type}
		Let $M\in\overline{\R}^{m\times n}$. The \emph{type} of a point $\vec x\in\TPT{n}$ with respect to $M$ is the $n$-tuple $S = (S_1,\dots,S_n)$, where
		\[
		S_j = \{ i \in [m]: (M \vec x)_i=M_{i\cdot} \odot \vec x   \text{ attains its maximum at }M_{ij}+x_j \}.
		\]
		The type of a point is denoted by $\type(\vec x)$. The set of all points with type $S$ forms a convex polyhedron which is denoted by
		\[
		X_S := \{ \vec x \in \TPT{n} : S\subseteq\type(\vec x) \}.
		\]
	\end{definition} 
	Note that the type sets $S$ cover $[m]$. This definition is given by \cite{develin2004tropical} for real-valued matrices; we note it is also valid for matrices with $-\infty$ entries.
	
	Intuitively, $S_j$ is the set of coordinates $i$ of $M\vec x$ that depend on the $j$th coordinate of $\vec x$; in neighbourhoods where $S_j = \emptyset$ then $M\vec x$ is invariant under changes to $x_j$, and in neighbourhoods where $S_j = [m]$ for some $j$ then $M\vec x$ takes the form $x_j \odot M_{\cdot j}$ and is therefore constant due to the equivalence relation on the tropical projective torus.
	
	The collection of all type cells $X_S$ form a cell decomposition of $\TPT{n}$ (Theorem 15 by \cite{develin2004tropical}) where $X_S \leq X_T$ if, and only if, $\forall\ j, \, T_j \subseteq S_j$. For any two types $S$ and $T$, $X_S\cap X_{T}=X_{S\cup T}$. \Cref{fig:matrix_map_types} shows the cell decomposition for a $2 \times 3$ matrix, and the local classically linear form of $M \vec x$ according to the type.
	
	\begin{figure}[ht!]
		\centering
		\begin{tikzpicture}[scale = 0.7,blend group = screen]
			\node (A) at (-2,-1) [right] {\scriptsize\,$(0,a-b,a-c)$};
			\node (B) at (5,1) [right] {\scriptsize\,$(0,d-e,d-f)$};
			\coordinate (A) at (-2,-1);
			\coordinate (B) at (5,1);
			\draw (A) -- (3,4);
			\draw (B) -- (8,4);
			\draw (A) -- (-6,-1);
			\draw (B) -- (-6,1);
			\draw (A) -- (-2,-3);
			\draw (B) -- (5,-3);
			\node[text width = 100] at (-3.5,-2) {\scriptsize Type: $(\{1,2\}, \emptyset, \emptyset)$ \\ $M \vec x = (a+x_1,d+x_1)$};
			\node[text width = 100] at (-3.5,0) {\scriptsize Type: $(\{2\}, \emptyset, \{1\})$ \\ $M \vec x = (c+x_3,d+x_1)$};
			\node[text width = 100] at (-2,2) {\scriptsize Type: $(\emptyset, \emptyset, \{1,2\})$ \\ $M \vec x = (c+x_3,f+x_3)$};
			\node[text width = 100] at (4,2) {\scriptsize Type: $(\emptyset, \{1\}, \{2\})$ \\ $M \vec x = (b+x_2,f+x_3)$};
			\node[text width = 100] at (7.5,-0.5) {\scriptsize Type: $(\emptyset, \{1,2\}, \emptyset)$ \\ $M \vec x = (b+x_2,e+x_2)$};
			\node[text width = 100] at (3,-2) {\scriptsize Type: $(\{2\}, \{1\}, \emptyset)$ \\ $M \vec x = (b+x_2,d+x_1)$};
			\fill[black] (A) circle (2pt);
			\fill[black] (B) circle (2pt);
		\end{tikzpicture}
		\caption{\centering$\TPT{3} \cong \R^2$ partitioned by type with respect to $M:= \begin{pmatrix} a & b & c\\ d & e & f \end{pmatrix}$.}
		\label{fig:matrix_map_types}
	\end{figure}
	
	\subsection{Wasserstein Distances Between Probability Distributions}
	\label{sec:wass}
	
	\emph{Wasserstein distances} measure distances between probability distributions.  They arise as a certain class of optimal transport problem, with applications in PDEs, image processing, optimisation and statistics. As opposed to $f$-divergences, they preserve the geometry of the state space over which they are defined and metrise weak convergence results such as the central limit theorem \citep[e.g.,][Section 3.1]{lee2022tropical}.
	
	The Wasserstein distance is a solution to a particular case of the \emph{optimal transport problem}, first introduced by \cite{monge1781memoire}.  The optimal transport problem is the search for an optimal mapping to transport a set of resources from their sources to sinks which minimises the transport cost.  When the cost function is given by a distance between source and sink, the solution to the optimal transport problem yields the Wasserstein distances. The space on which the source and sink locations are located is referred to as the \emph{state space}, while the physical distance that the resources need to be transported is measured by the \emph{ground metric}.
	
	The optimal transport problem was then relaxed to a probabilistic framework \citep{kantorovich1942translocation}. The question is then to find the optimal coupling of two random variables which minimises the expectation of a cost function; when the cost function is the ground metric of the state space, the solution to the problem gives the Wasserstein distance between probability measures.
	
	\begin{definition}[$p$-Wasserstein Distance]
		Let $(\Omega,d)$ be a Polish metric space and $p \in [1, \infty)$. Let $\mu, \nu \in P(\Omega)$ and let $\Pi(\mu,\nu)$ be the set of all couplings of $\mu$ and $\nu$. The \emph{$p$-Wasserstein distance} $W_p$ on $P(\Omega)$ is defined by
		$$
		\label{eqn:wass}
		W_p(\mu,\nu)^p \coloneqq \inf_{\pi \in \Pi(\mu,\nu)} \E_{(X,Y) \sim \pi} \left[ d(X,Y)^p \right].
		$$
	\end{definition}
	
	The $p$-Wasserstein distance is not necessarily finite on all of $P(\Omega)$, but will be finite on measures with finite $p$-moments, which we denote by $P_p(\Omega)$ \citep{villani2009optimal, villani2003topics}.
	
	The tropical projective torus is a Polish space \citep{monod2018StatPerspective}, while the \emph{tropical $p$-Wasserstein distance} is well-defined and has been previously studied by \cite{lee2022tropical} using the tropical projective torus as the state space and the tropical metric as the ground metric.
	
	\subsection{Wasserstein Distances Between Euclidean Spaces of Different Dimensions}
	\label{sec:cai_lim}
	
	To measure differences between probability distributions supported on Euclidean space of different dimensions, \cite{cai2022distances} constructed a Wasserstein pseudo-metric; it is a pseudo-metric in that it is symmetric and positive up to isometries. They note this pseudo-metric reflects well-understood properties of certain families of measures, as well as demonstrating computational advantages over the Gromov--Wasserstein distance \citep{memoli2011gromov}. We look to mirror their approach in the more complex tropical setting. \cite{cai2022distances} also establish the same results for $f$-divergences, but we will restrict our considerations to Wasserstein distances due to their inherently geometric behaviour; this behaviour is favourable for applications where our data has a tropical structure such as phylogenetic tree data.
	
	We will leverage the approach of \cite{cai2022distances} in our work and so we provide an outline of their strategy here. Suppose we have two measures, $\mu, \nu$ in $P_p(\R^m)$ and $P_p(\R^n)$ respectively, with $m \leq n$; we suppose there is no known correspondence between the coordinate systems of $\R^m$ and $\R^n$, even if $m=n$.
	
	Both measures are required to be on the same state space to take a Wasserstein distance, so \cite{cai2022distances} define their projections of interest as those composed of a semi-orthogonal matrix and a translation:
	$$
	\mathcal{M} = \{ \phi: \R^n \rightarrow \R^m : \phi(x)=A\vec x+b, \, A \in O(m,n), \, b \in \R^m \}.
	$$
	
	Then the sets of projection and embedding measures are defined as, respectively,
	\begin{gather*}
		\Phi^-(\nu,m) \coloneqq \{ \beta \in P(\R^m) : \beta = \phi_*(\nu) \text{ for some } \phi \in \mathcal{M}  \}, \label{eq:ProjMeasuresEuclid}\\
		\Phi^+(\mu,n) \coloneqq \{ \alpha \in P(\R^n) : \mu = \phi_*(\alpha) \text{ for some } \phi \in \mathcal{M}  \}, \label{eq:EmbdMeasuresEuclid}
	\end{gather*}
	where $\phi_*(\alpha)$ denotes the push-forward of $\alpha$; that is, $\phi_*(\alpha)(A) = \alpha(\phi^{-1}(A))$. 
	
	From these projected and embedded measures, the projection and embedding Wasserstein distances are defined respectively by
	\begin{align}
		W_p^-(\mu,\nu) = \inf_{\beta \in \Phi^-(\nu,m)} W_p(\mu, \beta), \label{eq:ProjWDist} \\
		W_p^+(\mu,\nu) = \inf_{\alpha \in \Phi^+(\mu,n)} W_p(\alpha, \nu). \label{eq:EmbdWDist}
	\end{align}
	
	While both the projection and embedding distances offer an intuitive measure for comparing $\mu$ and $\nu$, it is not obvious which would be more meaningful in practice: to map from the lower dimensional space to the higher dimensional one, or vice versa. The main result by \cite{cai2022distances} tells us we need not make an arbitrary choice; the two distances are equal.
	
	\begin{theorem}{\cite[][Theorem 2.2]{cai2022distances}}
		\label{thm:cai_lim}
		For $m \leq n, p \in [1, \infty]$, let $\mu \in P_p(\R^m), \nu \in P_p(\R^n)$. Then
		\[
		W^-(\mu,\nu) = W^+(\mu,\nu)
		\]
		where $W^-$ and $W^+$ are defined by \eqref{eq:ProjWDist} and \eqref{eq:EmbdWDist}.
	\end{theorem}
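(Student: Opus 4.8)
The plan is to establish the two inequalities $W^-(\mu,\nu)\le W^+(\mu,\nu)$ and $W^+(\mu,\nu)\le W^-(\mu,\nu)$ separately, exploiting the rigidity of the maps in $\mathcal{M}$: every $\phi\in\mathcal{M}$ has the form $\phi(x)=A\vec x+b$ with $AA^{\T}=I_m$, hence is an affine \emph{surjection} $\R^n\to\R^m$ that is $1$-Lipschitz (its largest singular value equals $1$) and whose fibres $\phi^{-1}(z)$ are parallel affine $(n-m)$-planes. I write $\phi(\rho)$ for the pushforward of a measure $\rho$, and I first treat $p\in[1,\infty)$, commenting on $p=\infty$ at the end.

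For the inequality $W^-\le W^+$, the easy half, I would take an arbitrary $\alpha\in\Phi^+(\mu,n)$, so that $\phi(\alpha)=\mu$ for some $\phi\in\mathcal{M}$, and observe that then $\phi(\nu)\in\Phi^-(\nu,m)$. For any coupling $\pi\in\Pi(\alpha,\nu)$, the pushforward $(\phi\times\phi)(\pi)$ is a coupling of $\phi(\alpha)=\mu$ and $\phi(\nu)$, and because $\|\phi(x)-\phi(y)\|\le\|x-y\|$ its transport cost does not exceed that of $\pi$. Minimising over $\pi$ gives $W_p(\mu,\phi(\nu))\le W_p(\alpha,\nu)$, whence $W^-(\mu,\nu)\le W_p(\alpha,\nu)$; taking the infimum over $\alpha\in\Phi^+(\mu,n)$ yields the claim.

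The reverse inequality $W^+\le W^-$ is where the real work lies. I would fix $\beta=\phi(\nu)\in\Phi^-(\nu,m)$ realising $W^-(\mu,\nu)$ up to $\varepsilon>0$, since the infimum need not be attained. Composing $\nu$ with a suitable orthogonal transformation of $\R^n$ and translating in $\R^m$ — operations under which $W_p$, $\Phi^+$ and $\Phi^-$ all transform compatibly — reduces $\phi$ to the coordinate projection $\R^m\times\R^{n-m}\to\R^m$, so that $\beta$ is the first marginal of $\nu$. I would then disintegrate $\nu(dz,dw)=\beta(dz)\,\nu_z(dw)$ (legitimate since $\R^m$ is standard Borel), pick an optimal $\pi\in\Pi(\mu,\beta)$, and define the lift $\alpha$ to be the law of $(x,w)$ when $(x,z)\sim\pi$ and $w\sim\nu_z$. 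By construction $\phi(\alpha)=\mu$, so $\alpha\in\Phi^+(\mu,n)$; and \emph{gluing} — sending such a draw to the pair $\big((x,w),(z,w)\big)$ — produces a coupling of $\alpha$ and $\nu$ whose cost is $\int\|x-z\|^p\,d\pi=W_p(\mu,\beta)^p$, because the two fibre coordinates coincide. Thus $W^+(\mu,\nu)\le W_p(\alpha,\nu)\le W_p(\mu,\beta)\le W^-(\mu,\nu)+\varepsilon$, and letting $\varepsilon\downarrow0$ closes the argument.

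The step I expect to be the main obstacle is the measure-theoretic bookkeeping in the reverse inequality: justifying the disintegration together with the measurable dependence $z\mapsto\nu_z$, verifying that the glued measure genuinely has marginals $\alpha$ and $\nu$, and checking $\alpha\in P_p(\R^n)$ (which follows from $\mu,\nu\in P_p$ via $\|(s,t)\|^p\le 2^{p-1}(\|s\|^p+\|t\|^p)$). For $p=\infty$ one replaces integrated cost by essential supremum throughout; both the pushforward estimate and the gluing construction carry over unchanged, the latter because the constructed coupling is supported on pairs whose fibre components agree.
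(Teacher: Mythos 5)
Your proof is correct and takes essentially the same route as the argument the paper relies on (Cai--Lim's Theorem 2.2, mirrored in the paper's proof of \Cref{thm:TropicalMainThm}): the easy inequality via non-expansiveness of the projections applied to pushforward couplings, and the reverse inequality by lifting $\nu$ along the fibres of a near-optimal projection and coupling so that the fibre coordinates agree, which forces the cost to equal that of the optimal base coupling. Your explicit disintegration of $\nu$ over its first marginal is just an unpacked form of the gluing lemma used there, and your reduction to a coordinate projection plays the role of the product decomposition ($f_M$ in the tropical case).
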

	
	The focal contribution of our work is a tropical parallel to the framework of \cite{cai2022distances}, allowing us to map between probability measures in tropical projective tori of different dimensions and establish the same metric equivalence in the more complex tropical setting.

	\section{Building Tropical Tools}
	\label{sec:tropical_tools}
	
	To build up to the tropical equivalent of \Cref{thm:cai_lim}, we first require tropical counterparts of central objects to the construction by \cite{cai2022distances}.  Specifically, we require a semi-orthogonal map in the tropical setting.  Notice that in our setting, the equivalence relation \eqref{eq:tpt_equiv} defining the tropical projective torus must always be preserved in mappings.
	
	\subsection{Tropical Matrix Maps}
	
	Standard linear maps fail to preserve the equivalence relation \eqref{eq:tpt_equiv} that characterizes the tropical projective torus, making them unsuitable for our aim of mapping between tropical projective tori. Instead, we consider tropical linear maps. We now review the properties of general tropical matrices, highlighting their irregular properties; namely, that tropical matrices can have bounded image and fibres of different dimension. 
	
	\subsubsection{Metric Geometry Under Tropical Linear Maps.}
	
	To formulate Wasserstein distances over tropical linear maps, we must understand their metric geometry. Here we prove a useful property of tropical linear maps, namely that they are always non-expansive with respect to the tropical metric.
	
	\begin{proposition}
		\label{prop:Non-Expansive_Map}
		For any tropical matrix $M$:
		\[
		\forall\ \vec x, \vec y \in \TPT{n}: \quad \dtr(M \vec x,M \vec y) \leq \dtr( \vec x, \vec y)
		\]
	\end{proposition}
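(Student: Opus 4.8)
The plan is to exploit two structural features of a tropical matrix map $M$: it is \emph{order-preserving} (monotone in each coordinate, since $(M\vec x)_i = \bigboxplus_{k} M_{ik}\odot x_k$ is a maximum of translates of the $x_k$), and it \emph{commutes with tropical scaling}, $M(c \odot \vec x) = c \odot (M\vec x)$ for every $c \in \R$. Together these force non-expansiveness in the Hilbert-type metric $\dtr$, in the same way that monotone, positively homogeneous maps are non-expansive in nonlinear Perron--Frobenius theory. So the proof should reduce to a short sandwich estimate.

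Concretely, I would fix $\vec x, \vec y \in \TPT{n}$ and set $a := \max_i (x_i - y_i)$ and $b := \min_i (x_i - y_i)$, so that $\dtr(\vec x,\vec y) = a - b$ and $y_i + b \le x_i \le y_i + a$ for every $i$. First I would push these sandwiching inequalities through $M$ row by row: for each $i$,
\[
(M\vec x)_i = \bigboxplus_{k=1}^n M_{ik} \odot x_k \;\le\; \bigboxplus_{k=1}^n M_{ik} \odot (y_k + a) \;=\; a + (M \vec y)_i,
\]
and symmetrically $(M\vec x)_i \ge b + (M\vec y)_i$; the standing non-degeneracy assumption (a real entry in each row) guarantees $(M\vec x)_i, (M\vec y)_i \in \R$, so these differences are well defined. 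Hence $b \le (M\vec x)_i - (M\vec y)_i \le a$ for all $i$, and taking $\max_i$ minus $\min_i$ yields $\dtr(M\vec x, M\vec y) \le a - b = \dtr(\vec x, \vec y)$.

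I do not expect a genuine obstacle; the whole content is the monotonicity-plus-homogeneity observation, after which the estimate is two lines. The only care needed is bookkeeping: checking the displayed inequalities still hold when $M_{ik} = -\infty$ (such terms contribute $-\infty$ on both sides and are discarded by $\boxplus$, which is legitimate because each row has a real entry), and confirming the statement is well posed on the quotient $\TPT{n}$, i.e. that replacing $\vec x, \vec y$ by $c\odot\vec x, c'\odot\vec y$ alters neither $\dtr(\vec x,\vec y)$ nor $\dtr(M\vec x,M\vec y)$ — immediate from $M(c\odot\vec x)=c\odot(M\vec x)$ and the translation-invariance built into $\dtr$. If one wanted a slicker phrasing, the same computation shows $M$ is $1$-Lipschitz for the sup-seminorm on representatives before passing to the quotient, which is perhaps the cleanest way to present it.
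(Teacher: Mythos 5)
Your proof is correct and rests on the same estimate as the paper's: for each row, the index maximizing $(M\vec x)_i$ gives a lower bound on $(M\vec y)_i$ (and symmetrically), which is exactly your coordinate-wise sandwich $b \le (M\vec x)_i - (M\vec y)_i \le a$. The paper organizes this by fixing the maximizing rows $r,s$ and columns $i,j,k,\ell$ and chaining inequalities, whereas you package the identical computation as monotonicity plus tropical homogeneity of $M$; the content is the same.
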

	
	\begin{proof}
		There exist coordinates $r,s$ such that 
		\[
		\dtr(M\vec x, M\vec y) = \max_i \{M_{ri} + x_{i}\} - \max_j \{M_{rj} + y_{j}\} - \max_{k}\{M_{sk} + x_{k}\} + \max_{\ell}\{M_{s\ell} + y_{\ell} \},
		\]
		Fix $i,j,k,\ell$ as the maximal arguments for the terms above.
		Then
		\begin{align*}
			\dtr(M \vec x,M \vec y) &= M_{ri} + x_{i} - (M_{rj} + y_{j}) - (M_{sk} + x_{k}) + M_{s\ell} + y_{\ell} \\
			&\leq M_{ri} + x_{i} - (M_{ri} + y_{i}) - (M_{s\ell} + x_{\ell}) + M_{s\ell} + y_{\ell} \\
			&\leq x_{i} - y_{i} - x_{\ell} + y_{\ell} \leq \dtr(\vec x, \vec y).
		\end{align*}
	\end{proof}
	
	In light of \Cref{prop:Non-Expansive_Map}, we note the non-expansive property is more restrictive in the Euclidean setting than the tropical setting; every tropical matrix map is non-expansive, but not every Euclidean matrix map is non-expansive.
	
	\subsubsection{Images of Tropical Matrices.} To define embedded measures, we must take a pull-back through $M$; this requires the image of $M$ to cover the support of our measure $\mu \in P_p(\TPT{m})$. However, tropical matrices often have bounded image, as we saw in \Cref{ex:bdd_matrix_im}. The following result characterises surjectivity for tropical matrix maps.
	
	\begin{lemma}\label{lem:surj}
		A tropical matrix map $M$ is surjective on $\TPT{m}$ if and only if, for each row $i$, there is at least one column $c(i)$ such that
		\begin{equation}
			\label{eqn:surj}
			M_{r,c(i)}\in\begin{cases}
				-\infty\qquad &\emph{if}\,\,r\neq i,\\
				\R\qquad &\emph{if}\,\,r = i.
			\end{cases}
		\end{equation}
	\end{lemma}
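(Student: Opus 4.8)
The plan is to prove both implications directly from the tropical matrix action $(M\vec x)_i=\max_{j\in[n]}\{M_{ij}+x_j\}$. The guiding picture is that condition \eqref{eqn:surj} says precisely that, up to a permutation of columns, $M$ contains an $m\times m$ block that is tropically diagonal (a real entry on the diagonal, $-\infty$ off it); such a block is tropically invertible and already surjects onto $\TPT{m}$, the remaining columns are harmless, and conversely a surjective $M$ is forced to hide such a block.

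\emph{Sufficiency.} Suppose columns $c(1),\dots,c(m)$ as in \eqref{eqn:surj} exist. I would first note that they are pairwise distinct: if $c(i)=c(i')$ with $i\neq i'$, that column would have to carry $-\infty$ in row $i$ (read as column $c(i')$) and a real entry in row $i$ (read as column $c(i)$), a contradiction. Given a target $\vec y\in\TPT{m}$ with representative $(y_1,\dots,y_m)$, define $\vec x\in\R^n$ by $x_{c(i)}:=y_i-M_{i,c(i)}$ for $i\in[m]$ and $x_j:=L$ for every other column $j$, where $L$ is chosen below $\min_{i'}y_{i'}$ minus the largest finite entry of $M$. A term-by-term inspection of $(M\vec x)_i$ then shows that the column $c(i)$ contributes exactly $y_i$, the columns $c(i')$ with $i'\neq i$ contribute $-\infty$, and every remaining column contributes strictly less than $y_i$; hence $(M\vec x)_i=y_i$ for all $i$ and $M\vec x=\vec y$ in $\TPT{m}$.

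\emph{Necessity.} Suppose $M$ is surjective and fix a row $i$. I would probe surjectivity with the points $k\,e_i\in\TPT{m}$ for $k\in\N$, where $e_i$ is the $i$-th coordinate vector. Since adding a constant to a representative of $\vec x$ adds the same constant to $M\vec x$, surjectivity yields representatives $\vec x^{(k)}\in\R^n$ with $(M\vec x^{(k)})_i=k$ and $(M\vec x^{(k)})_r=0$ for all $r\neq i$. The first identity forces, for each $k$, a column $j_k$ with $M_{i,j_k}\in\R$ and $x^{(k)}_{j_k}=k-M_{i,j_k}$; by the pigeonhole principle one column $j^\ast$ recurs for infinitely many $k$, and along that subsequence $x^{(k)}_{j^\ast}\to+\infty$. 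The identities $(M\vec x^{(k)})_r=0$ give $M_{r,j^\ast}\le -x^{(k)}_{j^\ast}$ for every $r\neq i$, so letting $k\to\infty$ forces $M_{r,j^\ast}=-\infty$ for all $r\neq i$; combined with $M_{i,j^\ast}\in\R$, the column $c(i):=j^\ast$ satisfies \eqref{eqn:surj}.

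The bookkeeping in the sufficiency direction is routine. The step that needs genuine care is the necessity direction: one cannot read off the required column from a single test point, and the right move is to drive a test point to infinity along a fixed coordinate direction and then invoke a pigeonhole argument to isolate a single column whose corresponding coordinate of the preimage must blow up — it is exactly this blow-up, fed into the vanishing equations for the other rows, that forces the off-diagonal entries of that column to be $-\infty$. The only other subtlety is ensuring that the remaining coordinates in the sufficiency construction are pushed down far enough relative to the target, which is why $L$ must be chosen in terms of both $\vec y$ and the finite entries of $M$.
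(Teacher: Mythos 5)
Your proof is correct and follows essentially the same approach as the paper: probe surjectivity with a target concentrated in coordinate $i$ to force a column that is $-\infty$ off row $i$, and conversely build a preimage by setting the $c(i)$-coordinates to hit $\vec y$ exactly while pushing all remaining coordinates far down. The only cosmetic difference is in the necessity direction, where you drive a test point to infinity and invoke pigeonhole, whereas the paper gets the same contradiction from a single test point with one sufficiently large constant $K$ exceeding all differences of real entries of $M$.
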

	
	\begin{proof}
		Suppose $M$ is surjective. Then let $\vec x$ be such that $M\vec x = [(0, \dots, K, \dots, 0)]$, where only the $i$-th entry is $K$, and $K$ is a positive real number such that
		$$
		K>\max\{M_{i_0j_0}-M_{i_1j_1}: M_{i_0j_0},M_{i_1j_1} \in \R\}.
		$$  
		Let $j \in [n]$ be some coordinate such that $(M\vec x)_i = M_{ij} + x_j = K$. If $M_{lj}$ is real for any other coordinate, then 
		$$
		0 = (M \vec x)_l \geq M_{lj} + x_j = M_{lj}-M_{ij}+M_{ij}+ x_j = M_{lj} - M_{ij} + K > 0,
		$$
		a contradiction. Notice that the final equality comes from the definition of $\vec x$, while the last equality is a consequence of the definition of $K$.
		
		To prove the other direction, let us consider $\vec y=(y_1,\ldots,y_m)\in\TPT{m}$. For each $i\in[m]$, we can pick some column $c(i)$ that satisfies \eqref{eqn:surj}. Let $I := \{c(i) : i \in [m] \}$. Then we define $K$ as before and define $\vec x=(x_1,\dots,x_n)\in\TPT{n}$ by
		$$
		x_j:=\begin{cases}
			y_{i}-M_{i,c(i)}\qquad&\emph{if}\,\,j=c(i),\\
			\min_{i \in [m] } \{ -K-y_{i}+M_{i,c(i)} \} \qquad&j \notin I
		\end{cases}
		$$
		By direct computation, we can check that the maximum for each row $i$ is achieved at $c(i)$ and that the value is exactly $y_i$.
	\end{proof}
	
	\subsubsection{Fibres of Tropical Matrices.}
	The semi-orthogonal projections used by \cite{cai2022distances} are well-behaved in the sense that they preserve some of the structure of $\nu \in P_p(\R^n)$. For example, a push-forward through a semi-orthogonal projection cannot increase the co-dimension of a measure \citep{heurteaux2007dimension}, as each fibre has dimension $n-m$. It is the \emph{fibres} of $M$ which determine the dimensionality of a projected measure and are therefore central to our work.
	\begin{definition}
		The \emph{fibre} of $M$ at a point $\vec y \in \TPT{m}$ is given by $F_{\vec y} = \{ \vec x\in\TPT{n} : M \vec x = \vec y \}$.
	\end{definition}
	
	\begin{example}
		\label{ex:fibres}
		Let $M:= \begin{pmatrix} a & b & c\\ d & e & f \end{pmatrix}$, as in \Cref{fig:matrix_map_types}. If $S_j$ is empty at $\vec x$, then $M\vec x$ is locally independent of $e_j$; from this we can draw the fibres of $M$, as illustrated in \Cref{fig:matrix_map_fibres} --- points of the same colour are mapped to the same point by $M$.
		\begin{figure}[ht!]
			\centering
			\begin{tikzpicture}[scale = 0.7,blend group = screen]
				\draw [fill=red, opacity=1/4] (-6,-3) -- (-2,-3) -- (-2,-1) -- (-6,-1);
				\draw [fill=red!71.4!blue, opacity=1/4] (-6,1) -- (-6,4) -- (3,4) -- (0,1);
				\draw [fill=blue, opacity=1/4] (5,1) -- (8,4) -- (9,4) -- (9,-3) -- (5,-3);
				\node (A) at (-2,-1) [right] {\scriptsize\,$(0,a-b,a-c)$};
				\node (B) at (5,1) [right] {\scriptsize\,$(0,d-e,d-f)$};
				\coordinate (A) at (-2,-1);
				\coordinate (B) at (5,1);
				\foreach \i in {1,...,5} {
					\pgfmathtruncatemacro{\tmp}{100-4.77*\i}
					\draw[red!\tmp!blue, opacity = 1/2] (-6,\i/3-1) -- (\i/3-2,\i/3-1) -- (\i/3-2,-3);
				}
				\draw [red!71.4!blue, opacity = 1/2] (0,1) -- (0,-3);
				\foreach \i in {1,...,14} {
					\pgfmathtruncatemacro{\tmp}{71.4 - 4.77 * \i}
					\draw[red!\tmp!blue, opacity = 1/2] (\i/3+3,4) -- (\i/3,1) -- (\i/3,-3);
				}
				\draw (A) -- (3,4);
				\draw (B) -- (8,4);
				\draw (A) -- (-6,-1);
				\draw (B) -- (-6,1);
				\draw (A) -- (-2,-3);
				\draw (B) -- (5,-3);
				\node[text width = 100] at (-3.5,-2) {\scriptsize Type: $(\{1,2\}, \emptyset, \emptyset)$ \\ $Mx = (a+x_1,d+x_1)$};
				\node[text width = 100] at (-3.5,0) {\scriptsize Type: $(\{2\}, \emptyset, \{1\})$ \\ $Mx = (c+x_3,d+x_1)$};
				\node[text width = 100] at (-2,2) {\scriptsize Type: $(\emptyset, \emptyset, \{1,2\})$ \\ $Mx = (c+x_3,f+x_3)$};
				\node[text width = 100] at (4,2) {\scriptsize Type: $(\emptyset, \{1\}, \{2\})$ \\ $Mx = (b+x_2,f+x_3)$};
				\node[text width = 100] at (7.5,-0.5) {\scriptsize Type: $(\emptyset, \{1,2\}, \emptyset)$ \\ $Mx = (b+x_2,e+x_2)$};
				\node[text width = 100] at (3,-2) {\scriptsize Type: $(\{2\}, \{1\}, \emptyset)$ \\ $Mx = (b+x_2,d+x_1)$};
				\fill[black] (A) circle (2pt);
				\fill[black] (B) circle (2pt);
			\end{tikzpicture}
			\caption{\centering$\TPT{3} \cong \R^2$ partitioned by type with respect to $M$ in \Cref{ex:fibres}.}
			\label{fig:matrix_map_fibres}
		\end{figure}
	\end{example}
	
	The following lemma characterises the fibres of general tropical matrices as polyhedral complexes; this can be seen as a refinement of Theorem 15 by \cite{develin2004tropical}.
	
	\begin{lemma}
		The fibres $F_{\vec y} = \{\vec x \in \TPT{n}: M(\vec x)=\vec y \}$ of a general tropical matrix $M$ are (unbounded) polyhedral complexes.
		\label{lem:fiber_complex}
	\end{lemma}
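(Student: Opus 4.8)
The plan is to exploit the fact that $M$, although only piecewise-linear, is genuinely affine-linear on each cell of the polyhedral complex of types from \Cref{def:type}, so that the fibre $F_{\vec y}$ is carved out of this complex by one affine equation per cell. Concretely, I would work with the subcollection of type cells $X_S$ whose type $S=(S_1,\dots,S_n)$ \emph{covers} $[m]$, meaning $\bigcup_j S_j=[m]$. Every point $\vec x\in\TPT{n}$ lies in $X_{\type(\vec x)}$, and $\type(\vec x)$ covers $[m]$ because each row of $M$ has a real entry and hence attains its maximum at some column; so these cells already cover $\TPT{n}$. By the face relation in \Cref{rem:inter:complex} (namely $X_T\le X_S \iff S_j\subseteq T_j$ for all $j$, and $X_S\cap X_T=X_{S\cup T}$), this subcollection is a polyhedral subcomplex of the full type complex, since $S\cup T$ still covers $[m]$ whenever $S$ and $T$ do. On such a cell, for each row $i$ pick any $j(i)$ with $i\in S_{j(i)}$ (this forces $M_{i,j(i)}\in\R$, as a $-\infty$ entry can never attain the maximum); then $(M\vec x)_i=M_{i,j(i)}+x_{j(i)}$ throughout $X_S$, so $M$ restricts to an affine-linear map $M_S$ on $X_S$, independent of the choices, which descends to the quotient.

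The core computation is then: for fixed $\vec y\in\TPT{m}$,
$$F_{\vec y}\cap X_S \;=\; X_S\cap M_S^{-1}(\vec y),$$
the intersection of the polyhedron $X_S$ with the affine subspace $M_S^{-1}(\vec y)$, hence itself a polyhedron (one can carry all of this out after fixing the affine chart identifying $\TPT{n}$ with $\R^n$). Since the cells $X_S$ cover $\TPT{n}$, taking the union over $S$ yields $\bigcup_S\big(F_{\vec y}\cap X_S\big)=F_{\vec y}$, so it remains only to see that the collection $\{F_{\vec y}\cap X_S\}$ is a polyhedral complex. This rests on the elementary fact that, for a polyhedron $P$ and an affine subspace $H$, every face of $P\cap H$ has the form $G\cap H$ for some face $G$ of $P$, and conversely $G\cap H$ is always a face of $P\cap H$.

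Applying this fact with $P=X_S$ and $H=M_S^{-1}(\vec y)$: a face of $F_{\vec y}\cap X_S$ equals $X_T\cap M_S^{-1}(\vec y)$ for some face $X_T\le X_S$, and because $X_T\subseteq X_S$ we have $M_S|_{X_T}=M|_{X_T}$, so this face is exactly $F_{\vec y}\cap X_T$ — again a member of the collection, so the collection is closed under taking faces. For compatibility of intersections, $(F_{\vec y}\cap X_S)\cap(F_{\vec y}\cap X_T)=F_{\vec y}\cap X_{S\cup T}$ by \Cref{rem:inter:complex}; since $X_{S\cup T}$ is a common face of $X_S$ and of $X_T$ in the type complex, the same fact shows $F_{\vec y}\cap X_{S\cup T}$ is a common face of $F_{\vec y}\cap X_S$ and $F_{\vec y}\cap X_T$. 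This establishes that $F_{\vec y}$ is a polyhedral complex. Unboundedness in general is then immediate: $M_S$ typically has a nontrivial kernel on $X_S$ — for instance on the cells in \Cref{ex:type} and \Cref{fig:matrix_map} where $M\vec x$ does not depend on some $x_j$ — so $F_{\vec y}$ carries a nontrivial recession cone.

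I expect the main obstacle to be precisely the bookkeeping that upgrades the cellwise polyhedra $F_{\vec y}\cap X_S$ to a bona fide polyhedral complex: one must confirm that faces of the pieces are again pieces and that pairwise overlaps are common faces, which hinges on the affine restrictions $M_S$ agreeing along the lower-dimensional cells where the type cells meet. A secondary point that needs care is the degeneracy phenomena emphasized earlier — $-\infty$ entries and type cells on which $M$ is not linear — which is exactly why the argument is run over the subcomplex of cells whose type covers $[m]$ rather than over the full type complex.
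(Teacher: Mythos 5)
Your proof is correct, and its skeleton is the same as the paper's: decompose the fibre along the type cells $X_S$, use that $M$ restricts to a classical affine map on each cell, and verify the polyhedral-complex axioms for the pieces $F_{\vec y}\cap X_S$. Where you differ is in how the bookkeeping is done. The paper introduces, for each type $S$, the explicit affine subspace $B^S_{\vec y}$ cut out by the equations $x_j-x_k=M_{1k}-M_{ij}+y_i-y_1$, and then verifies by coordinate computations that the sets $C^S_{\vec y}=X_S\cap B^S_{\vec y}$ are closed under intersection (showing $X_{S\cup T}\cap B^S_{\vec y}\cap B^T_{\vec y}=X_{S\cup T}\cap B^{S\cup T}_{\vec y}$), closed under taking faces (showing $X_T\cap B^S_{\vec y}=X_T\cap B^T_{\vec y}$ for $S\leq T$), and that $X_S\cap F_{\vec y}=X_S\cap B^S_{\vec y}$. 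You instead take \Cref{rem:inter:complex} as a black box (faces and intersections of type cells are type cells), observe directly that $F_{\vec y}\cap X_S=X_S\cap M_S^{-1}(\vec y)$, and invoke the standard fact that intersecting a polyhedron with an affine subspace sends faces to faces in both directions; the agreement of the affine restrictions on common faces ($M_S|_{X_T}=M|_{X_T}$) then replaces the paper's index manipulations. This buys a shorter and more conceptual verification at the cost of leaning on the unproved general fact and on the complex structure of the type decomposition, whereas the paper's computation is self-contained in coordinates. Your explicit restriction to types covering $[m]$ is a sensible piece of care (the paper handles this implicitly, since types of actual points cover $[m]$). One cosmetic caveat: like the paper, you do not really prove unboundedness, and indeed fibres need not be unbounded in general (e.g.\ the tropical identity matrix has singleton fibres), so your hedged ``typically'' is the right level of claim for that parenthetical.
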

	
	\begin{proof}
		
		For a given type $S$, we define
		$$
		B^S_{\vec y} := \bigcap_{i \in [m]} \{\vec x \in \TPT{n} : x_{j} - x_{k} = M_{1k}-M_{ij}+y_i-y_1\mid 1\in S_{k}, i\in S_{j} \}. 
		$$
		
		Then we define $C^S_{\vec y}:=X_S\cap B^{S}_{\vec y}$ and let $\mathcal{C}_{\vec y}:=\{C^S_{\vec y} : S \text{ is a type}\}$. We will show that $\mathcal{C}_{\vec y}$ is a polyhedral complex in $\TPT{n}$. Since for every type $S$, both $X_S$ and $B^{S}_{\vec y}$ are polyhedra in $\TPT{n}$, $C^S_{\vec y}$ is a well-defined polyhedron in $\TPT{n}$ as well.\\
		
		We first prove $\mathcal{C}_{\vec y}$ is closed under intersections. Consider two types $S$ and $T$; we can write $C^S_{\vec y}\cap C^{T}_{\vec y}$ as
		$$
		(X_S\cap X_T)\cap(B^S_{\vec y}\cap B^{T}_{\vec y})=X_{S\cup T}\cap(B^S_{\vec y}\cap B^{T}_{\vec y})
		$$
		and we will prove this last term is equal to $X_{S\cup T}\cap B^{S\cup T}_{\vec y}$. By definition, $B^{S\cup T}_{\vec y}$ is equal to:
		$$
		\bigcap_{i \in [m]} \{\vec x \in \TPT{n} : x_{j} - x_{k} = M_{1k}-M_{ij}+y_i-y_1\mid 1\in S_{k}\cup T_{k}, i\in S_{j}\cup T_{j} \}.
		$$
		We see that $B^{S\cup T}_{\vec y} \subset B^{S}_{\vec y} \cap B^{T}_{\vec y}$, so it remains to show that for any $\vec x\in X_{S\cup T}\cap(B^S_{\vec y}\cap B^{T}_{\vec y})$, $\vec x$ belongs to $B^{S\cup T}_{\vec y}$ as well. This amounts to proving that: 
		$$
		x_{j} - x_{k} = M_{1k}-M_{ij}+y_i-y_1,\qquad \forall i\in[m], 1\in S_{k}\cup T_{k}, i\in S_{j}\cup T_{j}.
		$$
		Let $i,j,k$ be such that $1\in S_k$ and $i\in S_j$. In this case, the equation holds trivially based on the definition of $B^{S}_{\vec y}$. Similarly, if $1\in T_k$ and $i\in T_j$ we are done. So, the remaining case is the one in which $1\in S_k$ and $i\in T_j$. We consider $j_0$ such that $i\in S_{j_0}$, notice that this exists since $S$ covers $[m]$. Since $\vec x\in X_{S\cup T}$ the following equation holds:
		$$
		M_{ij}+x_j=M_{ij_0}+x_{j_0}=y_i,
		$$
		implying that $x_j=M_{ij_0}-M_{ij}+x_{j_0}$. Now we can write $x_j-x_k$ in the following form:
		\begin{align*}
			x_j-x_k &= M_{ij_0}-M_{ij}+x_{j_0}-x_k \\
			&=M_{ij_0}-M_{ij}+M_{1k}-M_{ij_0}+y_i-y_1 \\
			&=M_{1k}-M_{ij}+y_i-y_1,
		\end{align*}
		where for the first equality we used that $\vec x\in B^S_{\vec y}$. Notice that we don't need to prove the case in which $1\in T_k$ and $i\in S_i$, as the roles of $S$ and $T$ are equivalent and so the same proof applies swapping their roles. So we proved that $\mathcal{C}_{\vec y}$ is closed under intersection.\\ 
		
		We prove now that the faces of elements of $\mathcal{C}_{\vec y}$ are contained in $\mathcal{C}_{\vec y}$. Since by definition $B^S_{\vec y}$ is an affine space for any type $S$, the faces of $C^S_{\vec y}$ are of the form $X_T \cap B^S_{\vec y}$ where $X_T$ is one of the faces of $X_S$; that is, $S \leq T$. It suffices to show now that $X_T \cap B^S_{\vec y} = X_T \cap B^T_{\vec y}$.
		
		As $S \leq T$, it is immediate that $B_{\vec y}^T \subset B_{\vec y}^S$. It remains to show that for all $\vec x \in X_T\cap B_{\vec y}^S$ we have $\vec x \in B_{\vec y}^T$.
		
		Consider $i,j,k$ such that $1 \in T_k, i \in T_j$; we will show that $x_j - x_k = M_{1k}-M_{ij}+y_i-y_1$. As $S$ covers $[m]$ and we have $S \leq T$, there are some $k', j'$ such that $1 \in S_{k'}, T_{k'}$ and $i \in S_{j'}, T_{j'}$. For all $\vec x \in X_T$ we have
		\[
		x_k+M_{1k} = x_{k'}+M_{1k'}, \qquad x_j+M_{ij} = x_{j'}+M_{ij'}.
		\]
		Hence, for all $\vec x \in X_T\cap B_{\vec y}^S$:
		\begin{align*}
			x_j - x_k &= x_{j'} + M_{ij'} - M_{ij} - x_{k'} - M_{1k'} + M_{1k}, \\
			&= y_i - M_{ij}  - y_1 + M_{1k},
		\end{align*}
		where the first equality uses that $\vec x \in X_T$ and the second equality uses that $\vec x \in B_{\vec y}^S$. We conclude that $\vec x \in B_{\vec y}^T$, and hence that $\mathcal{C}_{\vec y}$ is closed under taking faces.
		\\
		
		In order to complete the proof, it only remains to show that $F_{\vec y}=\cup_{S} C^S_{\vec y}$. 
		We know that $\TPT{n}=\cup_S X_S$, which implies that $F_{\vec y}=\cup_S(X_S\cap F_{\vec y})$. So it is enough to prove that $X_S\cap F_{\vec y}=X_S\cap B^{S}_{\vec y}$.
		
		We start by proving $X_S\cap F_{\vec y}\subseteq X_S\cap B^{S}_{\vec y}$. 
		Consider $i\in[n]$ and $k,j\in[m]$, such that $1\in S_k$ and $i\in S_j$, and fix $\vec x \in X_S\cap F_{\vec y}$. It follows from the definition of types that: 
		\begin{equation}
			\label{eq:type_condition}
			(M \vec x)_i - (M \vec x)_1 = x_j + M_{ij} - x_k - M_{1k}.
		\end{equation}
		We can therefore write $x_j-x_k$ as
		\begin{align}
			x_j-x_k&=M_{1k}-M_{ij}+(x_j+M_{ij})-(M_{1k}+x_k) \nonumber\\
			&=M_{1k}-M_{ij}+(M\vec x)_i-(M\vec x)_1 \label{eq:2nd_eq}\\
			&=M_{1k}-M_{ij}+y_i-y_1, \nonumber
		\end{align}
		where the penultimate equality is given by \Cref{eq:type_condition} and the final equality follows from $\vec x \in F_{\vec y}$. Hence $\vec x \in B_{\vec y}^S$, so we have $X_S\cap F_{\vec y}\subseteq X_S\cap B^{S}_{\vec y}$.
		
		Finally, we show $X_S \cap B_{\vec y}^S \subseteq F_{\vec y}$ by direct computation. For $i,j,k$ such that $1 \in S_k$ and $i \in S_j$:
		$$
		(M \vec x)_i - (M \vec x)_1 = M_{ij} + x_j - M_{1k} - x_k = y_i - y_1,
		$$
		where the first equality follows from $\vec x \in X_S$ and the second equality follows from $\vec x \in B_{\vec y}^S$. Therefore the fibre $F_{\vec y} = \cup_S C^S_{\vec y}$ is a polyhedral complex, as desired.
		
	\end{proof}
	
	\begin{remark}\label{rmk:maximal_cells}
		From our proof of \Cref{lem:fiber_complex}, we note that the poset of fibre cells $C^S_{\vec y}$ is given by a reverse type inclusion as in the case of type cells \citep{develin2004tropical}. Therefore, the maximal cells $C^S_{\vec y}$ of the fibre are exactly those whose type is given by a partition of $[m]$ (and copies of the empty set).
	\end{remark}
	
	We see in \Cref{fig:matrix_map_fibres} that fibre polyhedral complexes can have mixed dimension. A push-forward $M(\nu)$ through such matrices can preserve little to none of the structure of $\nu$ due to this irregular fibre dimension. We would therefore like to restrict our considerations to matrices with fibres of consistent dimension $n-m$. In the next section we identify the set of such matrices as the set of \emph{simple projections}.
	
	\subsection{Simple Projections} \label{sec:simple_projections}
	
	In addition to the assumption that the tropical matrices we study have at least one real entry per row as specified above, the following class of tropical matrices---\emph{simple projections}---have the necessary properties for our task of constructing our tropical Wasserstein distance. 
	
	\begin{definition}[Simple Projection]
		A tropical matrix $M \in \overline{\R}^{m \times n}$, where $n>m$, is called a \emph{simple projection} when each column has at most one real entry. 
		
		For $M$, we define $J_i \coloneqq \{ j: M_{ij} \in \R \}$. These sets are disjoint and non-empty, but do not necessarily cover $[n]$. Then
		\begin{align}\label{eq:simple_projections}
			(M \vec x)_i = \max_{j \in J_i} \{ M_{ij} + x_j \}.
		\end{align}
	\end{definition}
	
	The following relationship establishes when a tropical matrix is a simple projection in terms of the dimensionality of its fibres.
	
	\begin{lemma}\label{lemma:consistent_fibres} 
		$M$ is a simple projection if and only if, for all $\vec y \in \TPT{m}$, every maximal cell of the fibre $F_{\vec y}$ has dimension $n-m$.
	\end{lemma}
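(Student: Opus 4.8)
The plan is to derive both implications from the fibre description in \Cref{lem:fiber_complex}, namely $F_{\vec y}=\bigcup_S C^S_{\vec y}$ with $C^S_{\vec y}=X_S\cap B^S_{\vec y}$, together with \Cref{rmk:maximal_cells}, which says the maximal cells are exactly the $C^S_{\vec y}$ whose type $S$ is a partition of $[m]$. Throughout, $n>m$ is the standing hypothesis attached to simple projections. The first thing I would record is a dimension count for these cells: when $S$ is a partition of $[m]$ and $C^S_{\vec y}\neq\emptyset$, the affine set $B^S_{\vec y}$ has dimension $n-r(S)$, where $r(S)$ is the number of nonempty blocks of $S$. Indeed, fixing a representative, the defining equations of $B^S_{\vec y}$ pin down $x_j-x_k$ for the $r(S)$ columns carrying a nonempty block (one of them playing the role of $x_k$), giving $r(S)-1$ independent $\R\mathbf 1$-invariant equations, so the quotient has dimension $n-r(S)$. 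Hence $\dim C^S_{\vec y}\le n-r(S)$, with equality exactly when $X_S$ meets the relative interior of $B^S_{\vec y}$. The proof then reduces to controlling which partition types $S$ can occur and when $C^S_{\vec y}$ is full-dimensional inside $B^S_{\vec y}$.

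For the forward direction, suppose $M$ is a simple projection and let $C^S_{\vec y}$ be a nonempty maximal cell. If $i\in S_j$ then row $i$ attains its maximum at column $j$, so $M_{ij}\in\R$, i.e.\ $j\in J_i$; since the $J_i$ are pairwise disjoint, each block $S_j$ has at most one element, so $S$ is a partition of $[m]$ into $m$ singletons and $r(S)=m$, giving $\dim C^S_{\vec y}\le n-m$. For the matching lower bound I would take any $\vec x\in C^S_{\vec y}$ and drive towards $-\infty$ those coordinates indexed by $\bigcup_i J_i$ that are \emph{not} the unique maximising column of their row, together with the coordinates lying in no $J_i$. By disjointness of the $J_i$ these coordinates are unconstrained on $B^S_{\vec y}$, and once small enough every inequality cutting out $X_S$ becomes strict, so the resulting point lies in the relative interior of $B^S_{\vec y}$; since $n>m$ there is always at least one such coordinate available. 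Hence $\dim C^S_{\vec y}=n-m$ for every maximal cell of every fibre. (Equivalently, one may instead describe $F_{\vec y}$ explicitly as a product of a free affine space on the columns outside $\bigcup_i J_i$ with, for each $i$, the boundary of a box in the coordinates $J_i$, which is pure of dimension $|J_i|-1$; the product is then pure of dimension $n-m$.)

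For the converse I would argue by contraposition. If $M$ is not a simple projection then, as $n>m$, some column $j_0$ has real entries in two distinct rows; put $K_0=\{i:M_{ij_0}\in\R\}$, so $|K_0|\ge 2$. I would exhibit one point $\vec{x}^{*}\in\TPT{n}$ whose type is a partition with fewer than $m$ blocks: take $x^{*}_{j_0}=0$ and $x^{*}_j=-(C+\varepsilon_j)$ for $j\neq j_0$, with $C$ a large constant and the $\varepsilon_j$ small. For $C$ large, each row in $K_0$ attains its maximum uniquely at $j_0$, while each row $i\notin K_0$ attains its maximum among the columns $j\neq j_0$; for generic $\varepsilon_j$ this maximiser is unique for every such $i$, say at $j(i)$. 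Then $\type(\vec{x}^{*})$ is the partition type $S$ with $S_{j_0}=K_0$ and $i\in S_{j(i)}$ for $i\notin K_0$, which has $r(S)\le 1+(m-|K_0|)\le m-1$ nonempty blocks. Setting $\vec y:=M\vec{x}^{*}$, the identity $X_S\cap F_{\vec y}=X_S\cap B^S_{\vec y}$ from the proof of \Cref{lem:fiber_complex} gives $\vec{x}^{*}\in C^S_{\vec y}$, and since $\vec{x}^{*}$ has type \emph{exactly} $S$ it lies in the interior of the full-dimensional type cell $X_S$, hence in the relative interior of $C^S_{\vec y}$ in $B^S_{\vec y}$. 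Therefore $C^S_{\vec y}$ is a maximal cell of $F_{\vec y}$ of dimension $n-r(S)\ge n-m+1>n-m$, contradicting the hypothesis.

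I expect the genericity step in the converse to be the main obstacle: the $\varepsilon_j$ must be chosen so that no row $i\notin K_0$ has a tie between two of its admissible columns, i.e.\ one must avoid the finitely many linear conditions $\varepsilon_j-\varepsilon_{j'}=M_{ij}-M_{ij'}$, whose union is a measure-zero subset of $\varepsilon$-space. This is precisely what makes $\vec{x}^{*}$ realise the type $S$ on the nose, and hence land in the relative interior of $C^S_{\vec y}$ rather than on a proper face where the dimension bound would be lost; the competing preferences of different rows over which column should dominate is exactly the subtlety one must handle, and the generic perturbation resolves it uniformly. The remaining verification, that $\dim B^S_{\vec y}=n-r(S)$ and that passing to the $\R\mathbf 1$-quotient drops the dimension by exactly one, is the routine linear algebra indicated in the first paragraph.
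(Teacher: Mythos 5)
Your proof is correct and takes essentially the same route as the paper: both use \Cref{lem:fiber_complex} and \Cref{rmk:maximal_cells} to reduce the statement to showing that maximal types consist of $m$ singleton blocks exactly when $M$ is a simple projection, arguing one direction from the disjointness of the sets $J_i$ and the other by forcing two rows with real entries in a common column $j_0$ to attain their maxima there. The differences are only bookkeeping: you compute $\dim B^S_{\vec y}=n-r(S)$ directly and use an explicit generically perturbed point, where the paper uses the kernel of the $0/1$ matrix $M^S$ and the full-dimensional region $U_j$; your relative-interior step also makes explicit the dimension lower bound that the paper leaves implicit.
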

	
	\begin{proof}
		We will show that the following are equivalent:
		\begin{enumerate}
			\item $M$ is a simple projection, \label{TFAE:simple_proj}
			\item for all $\vec y$, the maximal cells of the fibre $F_{\vec y}$ have dimension $n-m$,\label{TFAE:fibre_dim}
			\item the maximal cells $X_S$ of the type cell decomposition of $\TPT{n}$ correspond to types consisting of $m$ singletons and $n-m$ empty sets.\label{TFAE:singleton_types}
		\end{enumerate}
		We first show that \ref{TFAE:fibre_dim} $\Leftrightarrow$ \ref{TFAE:singleton_types}.
		By \Cref{rmk:maximal_cells} a fibre cell $C^S_{\vec y}$ is maximal iff $S$ is a partition which is equivalent to $X_S$ being a maximal cell. For such an $S$, we define the classical matrix $M^S\in\R^{m\times n}$ and $b^S\in\R^m$ as follows
		\begin{equation*}
			M^S_{i,j}:=\begin{cases}
				1, &\emph{if}\,\, i\in S_j,\\
				0, &\emph{if}\,\, i\notin S_j
			\end{cases}\qquad 
			b_i:=M_{i,j(i)},
		\end{equation*}
		where $j(i)$ is the only $j$ such that $i\in S_j$. From this, we directly notice that the restriction of $M$ to $X_S$ is given by the classically affine operator mapping $x\in\R^n$ to $M^S\cdot x+b^S\in\R^m$, where here, matrix--vector multiplication is computed in the usual non-tropical manner. The dimension the maximal fibre cell $C^S_{\vec y}$ in $X_S$ is then given by the dimension of the kernel of $M^S$. Since this is a $0/1$ matrix with one non-zero entry per row, we may use a standard linear algebra argument to derive that 
		$$
		\ker M^S=|\{j\in[m] :  M^S_{\cdot,j}=\vec 0\}|=|\{j\in[m] : S_j=\emptyset \}|\leq n-m.
		$$ 
		From this we can deduce that every maximal type cell has exactly $m$ non-empty $S_j$ if and only if, $\forall\ \vec y \in \TPT{m}$, every maximal cell in $F_{\vec y}$ has dimension $n-m$. As $S$ is a partition, we conclude that \ref{TFAE:fibre_dim} $\Leftrightarrow$ \ref{TFAE:singleton_types}.
		
		We next show that \ref{TFAE:simple_proj} $\Leftrightarrow$ \ref{TFAE:singleton_types}. Suppose for contradiction that $M$ is not a simple projection, but for every maximal type cell $X_S$, the type $S$ consists of singletons. Then for some $j \in [n]$ and $i_0 \neq i_1 \in [m]$, both $M_{i_0j}$ and $M_{i_1j}$ are real. We set
		\begin{align*}
			B &:= \min \{ \min(M_{i_0j} - M_{i_0k}, M_{i_1j} - M_{i_1k}) : k \in[n]\} \\
			U &:= \{ \vec x\in\TPT{n}: x_k - x_j \leq -B\text{ for } k\in[n]\setminus\{j\}\}.
		\end{align*}
		$U$ is a full dimensional polyhedron in $\TPT{n}$, so there is some maximal $X_S$ such that $\dim(X_S\cap U)=\dim(X_S)$. By construction, for all $\vec x \in U$ and all $k \in [n]$,
		\begin{align*}
			M_{i_0k} + x_k \leq M_{i_0j} + x_j, \\
			M_{i_1k} + x_k \leq M_{i_1j} + x_j,
		\end{align*}
		and hence $i_0,i_1 \in S_{j}$ on $U_j$. Contradiction. \\
		To show the converse, suppose that $M$ is a simple projection. Then for all $\vec x$ in a maximal type cell $X_S$, $i \in S_j$ for some $j \in J_i$. As the $J_i$ are disjoint, each $S_j$ is either $\emptyset$ or a singleton.
	\end{proof}
	
	We can verify that simple projections satisfy the condition of \Cref{lem:surj} by their definition, and hence are surjective. By \Cref{lemma:consistent_fibres}, they also have fibres of dimension $n-m$. We conclude that simple projections are the most general tropical matrices satisfying these properties. 
	In \Cref{lem:def:hom} and \Cref{prop:homeomorphism} we now use these properties of simple projections to construct a homeomorphism $f_M$ between $\TPT{n}$ and $\TPT{m} \times F_{\vec 0}$ which is crucial for our main theorem.
	\begin{lemma}
		\label{lem:def:hom}
		Let $M$ be a simple projection from $\TPT{n}$ to $\TPT{m}$ and let $F_{\vec 0}$ be the fibre of $M$ at $\vec 0 \in \TPT{m}$. We define a map $f_M: \TPT{n} \rightarrow \TPT{m} \times F_{\vec 0}$ by mapping $\vec x\in\TPT{n}$ to $(M\vec x,\vec x-\vec z^{\vec x})\in\TPT{m}\times\TPT{n}$, where $\vec{z}^\vec x$ is defined as follows:
		\begin{align}
			\label{eq:def_z}
			z^{\vec x}_j := \begin{cases}
				(M\vec x)_{i} - \max_k (M\vec x)_k   & j \in J_i, \\
				0   & j \notin \cup_{i\in[n]} J_i.
			\end{cases}
		\end{align}
		The map $f_M$ is a well-defined map.
	\end{lemma}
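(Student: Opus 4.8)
The plan is to verify two things: that $f_M$ descends to a well-defined function on the quotient $\TPT{n}$ (i.e.\ that both its $\TPT{m}$-component and its $\TPT{n}$-component are constant on tropical-scaling equivalence classes), and that the second component $\vec x - \vec z^{\vec x}$ actually lies in the fibre $F_{\vec 0}$. Before either step I would record that $\vec z^{\vec x}$ is unambiguously defined as an element of $\R^n$: since $M$ is a simple projection the sets $J_i$ are pairwise disjoint, so each coordinate $j$ falls into exactly one of the two cases in \eqref{eq:def_z}; and the standing non-degeneracy hypothesis (a real entry in every row) guarantees each $J_i \neq \emptyset$, so $(M\vec x)_i = \max_{j \in J_i}\{M_{ij}+x_j\}$ is a genuine finite real number.

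For well-definedness on the quotient, I would take another representative $\vec x' = a \odot \vec x = \vec x + a\mathbf{1}$ and use equivariance of tropical matrix multiplication under scaling: $(M\vec x')_i = \max_j\{M_{ij}+x_j+a\} = (M\vec x)_i + a$, so $M\vec x' = M\vec x + a\mathbf{1} \sim M\vec x$ in $\TPT{m}$. Substituting into \eqref{eq:def_z}, the shift $a$ cancels in the difference $(M\vec x')_i - \max_k (M\vec x')_k$, so $\vec z^{\vec x'} = \vec z^{\vec x}$ exactly, whence $\vec x' - \vec z^{\vec x'} = (\vec x - \vec z^{\vec x}) + a\mathbf{1} \sim \vec x - \vec z^{\vec x}$ in $\TPT{n}$. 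Thus both coordinates of $f_M(\vec x')$ agree with those of $f_M(\vec x)$, so $f_M$ is a well-defined function on $\TPT{n}$.

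For the image being contained in $\TPT{m} \times F_{\vec 0}$, the first factor lies in $\TPT{m}$ by non-degeneracy, so it remains to show $M(\vec x - \vec z^{\vec x}) = \vec 0$ in $\TPT{m}$, i.e.\ that $M(\vec x - \vec z^{\vec x})$ is a constant vector. Using the simple-projection formula \eqref{eq:simple_projections} and the fact that on $j \in J_i$ the value $z^{\vec x}_j = (M\vec x)_i - \max_k(M\vec x)_k$ does not depend on $j$, one computes
\[
(M(\vec x - \vec z^{\vec x}))_i = \max_{j\in J_i}\{M_{ij} + x_j - z^{\vec x}_j\} = \Big(\max_{j\in J_i}\{M_{ij}+x_j\}\Big) - (M\vec x)_i + \max_k (M\vec x)_k = \max_k (M\vec x)_k,
\]
which is independent of $i$. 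Hence $M(\vec x - \vec z^{\vec x})$ is the constant vector $\big(\max_k(M\vec x)_k\big)\,\mathbf{1} \sim \vec 0$, so $\vec x - \vec z^{\vec x} \in F_{\vec 0}$, as claimed.

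I do not anticipate a genuine obstacle: the statement is essentially a bookkeeping verification. The one point requiring care is making sure the simple-projection structure is invoked where it matters—disjointness of the $J_i$ is exactly what makes $\vec z^{\vec x}$ well-defined, and the restriction $(M\vec x)_i = \max_{j\in J_i}\{M_{ij}+x_j\}$ is what lets the constant $z^{\vec x}_j$ be pulled out of the maximum defining $(M(\vec x - \vec z^{\vec x}))_i$. For a general tropical matrix the column index sets of distinct rows need not be disjoint, and $\vec z^{\vec x}$ would not even be well-defined, so this is the crucial place the hypothesis is used.
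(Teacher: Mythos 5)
Your proof is correct and follows essentially the same route as the paper: you show $\vec z^{\vec x}$ is invariant under tropical scaling so both components of $f_M$ descend to the quotient, and then compute that $(M(\vec x - \vec z^{\vec x}))_i$ equals a constant independent of $i$, placing $\vec x - \vec z^{\vec x}$ in $F_{\vec 0}$. Your preliminary observation that disjointness of the $J_i$ is what makes $\vec z^{\vec x}$ unambiguous is a sensible addition, and your sign in the final computation (the constant is $+\max_k(M\vec x)_k$) is in fact the correct one.
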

	\begin{proof}
		We must prove that this map respects the equivalence class defining $\TPT{n}$. This amounts to proving that for all $c\in\R$ and all $\vec x\in\TPT{n}$, $f_M(\vec x)\sim f_M(c\odot\vec x)$. Let us start by considering $\vec z^{c\odot\vec x}$, which, from \eqref{eq:def_z}, is given by
		\begin{align*}
			z^{c \odot \vec x}_j &= \begin{cases}
				(M(c\odot \vec x)_{i}-\max_k(M(c\odot \vec x)_k)=M(\vec x)_{i}-\max_k(M\vec x)_k   & j \in J_i \\
				0 \qquad\qquad   & j \notin \cup_i J_i, 
			\end{cases}
		\end{align*}
		so it is equal to $\vec z^{\vec x}$. Therefore we can write $f_M(c\odot\vec x)$ as
		\begin{align*}
			(c \odot M\vec x, c \odot \vec x -\vec z^\vec x)=(c \odot M\vec x, c \odot( \vec x -\vec z^\vec x))\sim (M\vec x, \vec x- \vec z^{\vec x}) = f_M(\vec x),
		\end{align*}
		therefore proving that $f_M$ is well defined.
		
		We now prove that $\Im f_M \subset \TPT{m} \times F_{\vec 0}$. It suffices to show $\vec x-\vec z^\vec x \in F_{\vec 0}$, which can be done by writing $M(\vec x-\vec z^\vec x)_i$ for each $i\in[m]$ as 
		\begin{align*}
			\max_{j \in J_i} \{ M_{ij} + x_j - (M\vec x)_{i} - \max_k (M\vec x)_k \} &= \max_{j \in J_i} \{ x_j +M_{ij}\}  - (M\vec x)_{i} - \max_k (M\vec x)_k \\
			&=- \max_k (M\vec x)_k, 
		\end{align*}
		which is independent of $i$, and hence $M(\vec x-\vec z^\vec x)\sim\vec 0$.  Notice that for the first equality, we may take $(M\vec x)_{i}$ and $\max_k (M\vec x)_k$ out of the maximum since they are independent of $j$, while for the second equality we use the simple projection definition.
	\end{proof}
	
	\begin{proposition}   
		\label{prop:homeomorphism}
		The map $f_M$ defined in \Cref{lem:def:hom} is a homeomorphism between $\TPT{n}$ and $\TPT{m}\times F_{\vec 0}$ satisfying
		\begin{gather}
			\frac{1}{3} d_{\TPT{m}\times F_{\vec 0}}(f(\vec x^1), f(\vec x^2)) \leq \dtr^{n}(\vec x^1,\,\vec x^2) \leq d_{\TPT{m}\times F_{\vec 0}}(f(\vec x^1), f(\vec x^2)),\label{eq:metric_equivalence}
		\end{gather}
		where $d_{\TPT{m}\times F_{\vec 0}}((\vec u^1, \vec v^1), (\vec u^2, \vec v^2)) = d_{\tr}^m(\vec u^1, \vec u^2) + d_{\tr}^n(\vec v^1, \vec v^2)$.
	\end{proposition}
	
	\begin{proof}

		We begin by showing the surjectivity of $f_M$. Consider $(\vec y,\vec u)$ in $\TPT{m} \times F_{\vec 0}$ and define
		$$
		w_j := \begin{cases}
			y_{i} - \max_k y_k  & j \in J_i \\
			0   & j \notin \cup_i J_i. 
		\end{cases}
		$$
		We will show $f_M(\vec w+\vec u) \sim (\vec y,\vec u)$. Let us start by considering the entries of $M(\vec w+\vec u)$, which we can write in the following way:
		\begin{align*}
			M(\vec w+\vec u)_i &= \max_{j \in J_i} \{M_{ij} + y_{i} - \max_k y_k+u_j \} \\
			&= \max_{j \in J_i} \{M_{ij}+ u_j \} + y_i - \max_k y_k\\
			&= y_i - \max_k y_k,
		\end{align*}
		implying that $M(\vec w+\vec u)=\vec y$ and $\vec w =\vec  z^{\vec w+\vec u}$. This then implies that: $$f_M(\vec w+\vec u) = (\vec y, \vec{w+u-w}) = (\vec y,\vec u),$$ hence $f_M$ is surjective. 
		
		In order to show injectivity, consider $\vec x^1,\vec x^2\in\TPT{n}$ such that $f_M(\vec x^1)\sim f_M(\vec x^2)$. This means that there are $c_1,c_2\in\R$ where:
		\begin{align}
			M\vec x^2 &= c_1 \odot M\vec x^1, \label{eq:injectivity1}\\
			\vec x^1 - \vec z^{\vec x^1} &= c_2\odot (\vec x^2 - \vec z^{\vec x^2}). \label{eq:injectivity2}
		\end{align}
		
		From \eqref{eq:def_z} we see that \eqref{eq:injectivity1} implies 
		$\vec z^{\vec x^1} = \vec z^{\vec x^2}.$
		We can rewrite \eqref{eq:injectivity2} as
		$$
		\forall\ j\in[n]: \qquad x^1_j=z^{\vec x^1}_j+c_2+x^2_j-z^{\vec x^2}_j=z^{\vec x^2}_j+c_2+x^2_j-z^{\vec x^2}_j=c_2+x^2_j
		$$ 
		from which we deduce $\vec x^1\sim\vec x^2$, proving $f_M$ is a bijection.
		
		Finally, we prove that $f_M$ is an homeomorphism, i.e., $f_M$ and $f_M^{-1}$ are both continuous, by proving the metric equivalence \eqref{eq:metric_equivalence}.
		First, note that
			$$
			z^{\vec x^1}_j-z^{\vec x^2}_j = \begin{cases}
				(M\vec x^1)_{i} - \max_k (M\vec x^1)_k - (M\vec x^2)_{i} + \max_k (M\vec x^2)_k   &j \in J_i \\
				0   &  j \notin \cup_i J_i,
			\end{cases}
			$$
		so we consider $r_1,r_2\in[m]$ such that $(M\vec x^1)_{r_1} = \max_k (M\vec x^1)_k$ and $(M\vec x^2)_{r_2} = \max_k (M\vec x^2)_k$. We then have 
		\begin{equation}
			\begin{aligned}
				\label{eq:z_diff}
				&z^{\vec x^1}_{r_1}-z^{\vec x^2}_{r_1} \geq 0,\\
				&z^{\vec x^1}_{r_2}-z^{\vec x^2}_{r_2} \leq 0.
			\end{aligned}
		\end{equation}
		Hence,
		\begin{align}
			\dtr^{n}(\vec z^{\vec x^1},\vec z^{\vec x^2})&=\|\vec z^{\vec x^1} - \vec z^{\vec x^2}\|_{\mathrm{tr}}\nonumber\\
			&= \max_{a,b\in[n]} \{ z^{\vec x^1}_a-z^{\vec x^2}_a - z^{\vec x^1}_b + z^{\vec x^2}_b \}\nonumber\\
			&=\max_{a,b\in\cup_iJ_i} \{z^{\vec x^1}_a-z^{\vec x^2}_a - z^{\vec x^1}_b + z^{\vec x^2}_b  \}\nonumber\\
			&= \max_{r,s\in[m]} \{ (M\vec x^1)_{r} - (M\vec x^2)_{r} - (M\vec x^1)_{s} + (M\vec x^2)_{s} \}\nonumber\\
			&= \dtr^{m}(M\vec x^1,M\vec x^2). \label{eq:z:ineq}
		\end{align}                
		Notice that \eqref{eq:z_diff} allows us to restrict the maximum to $\cup_iJ_i$, as for all $j$ not in $\cup_i J_i$ we have that $z^{\vec x^1}_j - z^{\vec x^2}_j = 0$. 
		
		We now prove the first half of \eqref{eq:metric_equivalence}. Firstly, by the triangle inequality, \eqref{eq:z:ineq}, and the non-expansivity of $M$ (\Cref{prop:Non-Expansive_Map}), we note:
		\begin{align*}
			\frac{1}{3}\dtr^{n}(\vec x^1-\vec z^{\vec x^1},\vec x^2-\vec z^{\vec x^2}) &= \frac{1}{3} \|\vec x^1-\vec z^{\vec x^1}-\vec x^2+\vec z^{\vec x^2}\|_{\mathrm{tr}} \\
			&\leq  \frac{1}{3} \left( \|\vec x^1-\vec x^2\|_{\mathrm{tr}} + \|\vec z^{\vec x^2}-\vec z^{\vec x^1} \|_{\mathrm{tr}} \right)\\
			&= \frac{1}{3} \left(\dtr^{n}(\vec x^1,\vec x^2) + \dtr^{m}(M\vec x^1,M\vec x^2)\right) \\
			&\leq \frac{2}{3}\dtr^{n}(\vec x^1,\vec x^2).
		\end{align*}
		Adding this to the non-expansive inequality gives us the left hand inequality of \eqref{eq:metric_equivalence}:
		\begin{align*}
			\frac{1}{3}\dtr^{m}(M\vec x^1,M\vec x^2) + \frac{1}{3}\dtr^{n}(\vec x^1-\vec z^{\vec x^1},\vec x^2-\vec z^{\vec x^2}) \leq \dtr^{n}(\vec x^1,\vec x^2).
		\end{align*}
		We now prove the second half of \eqref{eq:metric_equivalence}:
		\begin{align*}
			\dtr^{n}(\vec x^1,\vec x^2) &= \|\vec x^1-\vec z^{\vec x^1}-\vec x^2+\vec z^{\vec x^2}+\vec z^{\vec x^1}-\vec z^{\vec x^2}\|_{\mathrm{tr}}\\
			&\leq \|\vec x^1-\vec z^{\vec x^1}-\vec x^2+\vec z^{\vec x^2}\|_{\mathrm{tr}}+\|\vec z^{\vec x^1}-\vec z^{\vec x^2}\|_{\mathrm{tr}}\\
			&= \dtr^{n}(\vec x^1-\vec z^{\vec x^1},\,\vec x^2-\vec z^{\vec x^2}) + \dtr^{m}(M\vec x^1,\,M\vec x^2),
		\end{align*}
		where we used the triangle inequality and \eqref{eq:z:ineq}.
		This shows strong equivalence of the metrics, proving that $f$ is a homeomorphism.
	\end{proof}
	
	This product space structure of $\TPT{n}$ will allow us to prove that when optimising over simple projections, the projective and embedding Wasserstein distances on tropical projective tori are the same.
	
	\section{Tropical Wasserstein Mappings for Different Dimensions}
	\label{sec:main_thm}
	
	Given the tropical tools and results established above, we are now equipped to present our main theoretical result; when we use simple projections as our inter-dimensional tropical maps, the projection and embedding Wasserstein distances coincide as in the Euclidean case.
	
	Following the presentation of our main result in this section, we discuss implications of our work in the setting of phylogenetic trees.
	
	\subsection{Equivalent Tropical Wasserstein Distances}
	
	We define our set of simple projections from $\TPT{n}$ to $\TPT{m}$ by 
	\[
	\mathcal{M}_{\mathrm{tr}} \coloneqq \{ \phi_M: \TPT{n} \rightarrow \TPT{m} : \phi_M(x) = Mx \text{ where $M$ is a simple projection} \}.
	\]
	We note each $\phi \in \mathcal{M}_{\mathrm{tr}}$ is measurable as it is continuous.
	
	The sets of projected and embedded measures $\Phi_{\mathrm{tr}}^-$ and $\Phi_{\mathrm{tr}}^+$ are now given by
	\begin{gather*}
		\Phi_{\mathrm{tr}}^-(\nu,m) \coloneqq \{ \beta \in P(\TPT{m}) : \beta = \phi_*(\nu) \text{ for some } \phi \in \mathcal{M}_{\mathrm{tr}}  \}, \label{eq:ProjMeasuresTrop}\\
		\Phi_{\mathrm{tr}}^+(\mu,n) \coloneqq \{ \alpha \in P(\TPT{n}) : \mu = \phi_*(\alpha) \text{ for some } \phi \in \mathcal{M}_{\mathrm{tr}}  \}, \label{eq:EmbdMeasuresTrop}
	\end{gather*}
	while the tropical projection and embedding Wasserstein distances are given by 
	\begin{align*}
		W_{\mathrm{tr},p}^-(\mu,\nu) = \inf_{\beta \in \Phi_{\mathrm{tr}}^-(\nu,m)} W_{\mathrm{tr},p}(\mu, \beta), \label{eq:TropProjWDist} \\
		W_{\mathrm{tr},p}^+(\mu,\nu) = \inf_{\alpha \in \Phi_{\mathrm{tr}}^+(\mu,n)} W_{\mathrm{tr},p}(\alpha, \nu). 
	\end{align*}
	
	We begin by establishing the following lemma, which verifies that simple projections are non-expansive on $P_p(\TPT{n})$ as well as on $\TPT{n}$.
	
	\begin{proposition}
		\label{prop:NonExpansiveOT}
		For all $p \in [0,\infty), \phi \in \mathcal{M}_{\mathrm{tr}}$ and $\alpha, \nu \in P_p(\TPT{n})$, we have
		\[
		W_{\mathrm{tr},p}(\phi_*(\alpha), \phi_*(\nu)) \leq W_{\mathrm{tr},p}(\alpha, \nu).
		\]
	\end{proposition}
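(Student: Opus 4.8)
The plan is the standard "push the coupling through the map" argument for optimal transport, now driven by the pointwise non-expansiveness recorded in \Cref{prop:Non-Expansive_Map}. Fix a simple projection $\phi = \phi_M \in \mathcal{M}_{\mathrm{tr}}$, which is measurable since it is continuous. First I would observe that the right-hand side is finite (as $\alpha,\nu\in P_p(\TPT{n})$) and that the left-hand side is well-defined: fixing any basepoint $\vec x_0\in\TPT{n}$, \Cref{prop:Non-Expansive_Map} gives $\dtr^{m}(\phi(\vec x),\phi(\vec x_0))^p \le \dtr^{n}(\vec x,\vec x_0)^p$, and integrating against $\alpha$ shows $\phi(\alpha)\in P_p(\TPT{m})$, and likewise $\phi(\nu)\in P_p(\TPT{m})$.

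Next I would transport an optimal coupling through $\phi$. Since $\TPT{n}$ is Polish and $(\vec x,\vec y)\mapsto \dtr^{n}(\vec x,\vec y)^p$ is continuous, there is an optimal $\pi\in\Pi(\alpha,\nu)$ with $\int \dtr^{n}(\vec x,\vec y)^p\,d\pi = W_{\mathrm{tr},p}(\alpha,\nu)^p$ (or, to avoid invoking existence, one takes $\pi_k$ with $\int \dtr^{n}\,^p\,d\pi_k \le W_{\mathrm{tr},p}(\alpha,\nu)^p + 1/k$). Set $\tilde\pi := (\phi\times\phi)_*\pi$. Its two marginals are $\phi_*\alpha = \phi(\alpha)$ and $\phi_*\nu = \phi(\nu)$, so $\tilde\pi\in\Pi(\phi(\alpha),\phi(\nu))$ and is admissible in the infimum defining $W_{\mathrm{tr},p}(\phi(\alpha),\phi(\nu))$.

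The core estimate is then immediate from the change-of-variables formula, \Cref{prop:Non-Expansive_Map}, and monotonicity of $t\mapsto t^p$ on $[0,\infty)$ applied pointwise under the integral:
\[
W_{\mathrm{tr},p}(\phi(\alpha),\phi(\nu))^p \;\le\; \int \dtr^{m}(\vec u,\vec v)^p\,d\tilde\pi(\vec u,\vec v) \;=\; \int \dtr^{m}(\phi(\vec x),\phi(\vec y))^p\,d\pi(\vec x,\vec y) \;\le\; \int \dtr^{n}(\vec x,\vec y)^p\,d\pi \;=\; W_{\mathrm{tr},p}(\alpha,\nu)^p,
\]
and taking $p$-th roots yields the claim. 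There is no genuine obstacle here: the only points needing a line of care are the measurability of $\phi$ (handled, since $\phi$ is continuous) and the finite-$p$-moment check that makes the statement meaningful; everything else is the routine pushforward-of-couplings computation, with \Cref{prop:Non-Expansive_Map} doing the real work.
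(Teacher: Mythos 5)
Your proposal is correct and is essentially the same argument as the paper's: push the (near-)optimal coupling forward through $\phi\times\phi$, verify the marginals, and bound the cost integrand pointwise via \Cref{prop:Non-Expansive_Map}. The only minor additions on your side (handling existence of the optimal coupling via an $\epsilon$-approximation and checking $\phi(\alpha),\phi(\nu)\in P_p(\TPT{m})$, which the paper defers to \Cref{cor:FiniteMoments}) are harmless refinements, not a different route.
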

	
	\begin{proof}
		This proof follows the same approach as the proof of Lemma 2.1 by \cite{cai2022distances}.
		
		Let $\pi \in P((\TPT{n})^2)$ be the optimal transport coupling for $\alpha$ and $\nu$. Then define $\pi_m \in P((\TPT{m})^2)$ as the push-forward of $\pi$ through $\phi \times \phi$:
		\[
		\pi_m(A \times B) = \pi( \phi^{-1}(A) \times \phi^{-1}(B)).
		\]
		Checking the marginals of $\pi_m$, we have
		\begin{align*}
			\pi_m(A,\TPT{m}) = \pi(\phi^{-1}(A) \times \TPT{n}) = \alpha(\phi^{-1}(A)) = \phi_*(\alpha)(A), \\
			\pi_m(\TPT{m},B) = \pi(\TPT{n} \times \phi^{-1}(B)) = \nu(\phi^{-1}(B)) = \phi_*(\nu)(B).
		\end{align*}
		Therefore, $\pi_m$ is a coupling of $\phi_*(\alpha)$ and $\phi_*(\nu)$. We can then bound the Wasserstein distance between $\phi_*(\alpha)$ and $\phi_*(\nu)$ by:
		\begin{align*}
			W_{\mathrm{tr},p}(\phi_*(\alpha), \phi_*(\nu))^p 
			&\leq \E_{(X,Y) \sim \pi_m}[d_{\mathrm{tr}}(X,Y)^p] \\
			&= \E_{(U,V) \sim \pi}[d_{\mathrm{tr}}(\phi_*(U),\phi_*(V))^p]\\
			&\leq \E_{(U,V) \sim \pi}[d_{\mathrm{tr}}(U,V)^p]\\
			&= W_{\mathrm{tr},p}(\alpha,\nu)^p,
		\end{align*}
		where for the last inequality, we used the non-expansive property (\Cref{prop:Non-Expansive_Map}).
	\end{proof}
	
	\begin{remark}
		Notice that although stated for simple projections, in the proof, we only used the non-expansive property so the proposition holds for any tropical matrix map.
	\end{remark}
	
	As well as relating Wasserstein distances across spaces, \Cref{prop:NonExpansiveOT} gives us the following corollary ensuring finite moments.
	
	\begin{corollary}
		\label{cor:FiniteMoments}
		For any $\nu \in P_p(\TPT{n})$ and any $\phi\in\mathcal{M}_{\mathrm{tr}}$, the push-forward $\phi_*(\nu)$ has finite $p$th moment; that is, $\phi_*(\nu) \in P_p(\TPT{m})$. 
	\end{corollary}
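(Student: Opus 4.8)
The plan is to reduce the statement directly to the non-expansiveness of simple projections established in \Cref{prop:Non-Expansive_Map} (equivalently, in \Cref{prop:NonExpansiveOT}). Recall that membership of a measure in $P_p$ is equivalent to finiteness of its $p$th moment about any fixed base point, and that this finiteness does not depend on the choice of base point because $\dtr$ satisfies the triangle inequality. Hence it suffices to exhibit a single base point in $\TPT{m}$ about which $\phi(\nu)$ has finite $p$th moment.

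First I would fix a base point $\vec x_0 \in \TPT{n}$ and set $\vec y_0 := \phi(\vec x_0) \in \TPT{m}$. By the change-of-variables formula for pushforward measures,
\[
\int_{\TPT{m}} \dtr(\vec y, \vec y_0)^p \, d\phi(\nu)(\vec y) = \int_{\TPT{n}} \dtr(\phi(\vec x), \phi(\vec x_0))^p \, d\nu(\vec x).
\]
Then I would apply \Cref{prop:Non-Expansive_Map} pointwise in the integrand, $\dtr(\phi(\vec x), \phi(\vec x_0)) \leq \dtr(\vec x, \vec x_0)$, so that raising to the $p$th power and integrating yields
\[
\int_{\TPT{m}} \dtr(\vec y, \vec y_0)^p \, d\phi(\nu)(\vec y) \leq \int_{\TPT{n}} \dtr(\vec x, \vec x_0)^p \, d\nu(\vec x) < \infty,
\]
the final inequality holding precisely because $\nu \in P_p(\TPT{n})$. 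This gives $\phi(\nu) \in P_p(\TPT{m})$.

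Alternatively, and even more succinctly, one can invoke \Cref{prop:NonExpansiveOT} with the Dirac mass $\delta_{\vec x_0}$: since $W_{\mathrm{tr},p}(\nu, \delta_{\vec x_0})^p$ equals the $p$th moment of $\nu$ (finite as $\nu \in P_p(\TPT{n})$) and $\phi(\delta_{\vec x_0}) = \delta_{\phi(\vec x_0)}$, non-expansiveness gives $W_{\mathrm{tr},p}(\phi(\nu), \delta_{\phi(\vec x_0)}) \leq W_{\mathrm{tr},p}(\nu, \delta_{\vec x_0}) < \infty$, and the left-hand side is the $p$th moment of $\phi(\nu)$.

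There is essentially no obstacle here: the only point requiring a word of care is that the definition of $P_p$ is independent of the base point, which legitimizes taking the base point of $\phi(\nu)$ to be the image of that of $\nu$; everything else is the pushforward formula combined with the already-established non-expansiveness.
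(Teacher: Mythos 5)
Your proposal is correct and follows essentially the same route as the paper: the paper also bounds $W_{\mathrm{tr},p}(\delta_{\vec 0},\phi(\nu))$ via \Cref{prop:NonExpansiveOT} after writing the Dirac at $\vec 0$ as $\phi(\delta_{\vec x_0})$ for some $\vec x_0\in F_{\vec 0}$, which is your second argument up to the (immaterial) choice of base point. Your first, change-of-variables version is just the pointwise form of the same non-expansiveness idea, and if anything is slightly cleaner since taking $\vec y_0=\phi(\vec x_0)$ avoids any appeal to surjectivity.
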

	
	\begin{proof}
		In order to prove that $\phi_*(\nu)$ has finite $p$th moment, we only need to show that there exists a measure with finite $p$th moment from which $\phi_*(\nu)$ has finite $p$-Wasserstein distance. Consider the Dirac measure on $\TPT{m}$, concentrated in $\vec 0$. This can be seen as the push-forward of a Dirac measure in $\TPT{n}$ with the support in any point of $\vec x_0\in F_{\vec 0}$.  Let $\delta_{\vec x}$ be the Dirac measure concentrated at $\vec x$.  Then we can write
		\begin{equation*}
			\begin{aligned}
				W_{\mathrm{tr},p}(\delta_{\vec 0},\phi_*(\nu))=W_{\mathrm{tr},p}(\phi_*(\delta_{\vec x_0}),\phi_*(\nu))\leq W_{\mathrm{tr},p}(\delta_{\vec x_0},\nu) < \infty.
			\end{aligned}
		\end{equation*}
		Here, notice the last term is finite since $\nu$ has finite moments by definition.
	\end{proof}
	
	We now state and prove our main theorem: the equivalence of the tropical projection and embedding Wasserstein distances $W_{\mathrm{tr},p}^-$ and $W_{\mathrm{tr},p}^+$.
	
	\begin{theorem}\label{thm:TropicalMainThm}
		Let $p \in [1,\infty)$. For all $\mu \in P_p(\TPT{m})$ and $\nu \in P_p(\TPT{n})$,
		\[
		W_{\mathrm{tr},p}^-(\mu,\nu) = W_{\mathrm{tr},p}^+(\mu,\nu) < \infty.
		\]
	\end{theorem}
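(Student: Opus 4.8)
The plan is to mirror the Euclidean argument of \cite{cai2022distances}, replacing semi-orthogonal projections by simple projections and exploiting the product decomposition $\TPT{n}\cong\TPT{m}\times F_{\vec 0}$ furnished by \Cref{prop:homeomorphism}, under which a simple projection $\phi_M$ becomes the coordinate projection $\pi_1$ onto $\TPT{m}$ (its first component is precisely $\phi_M$). Since $\Phi_{\mathrm{tr}}^-(\nu,m)=\{\phi_M(\nu): M\text{ a simple projection}\}$ and, because measures of infinite $p$-th moment contribute $+\infty$, the infimum defining $W_{\mathrm{tr},p}^+$ may be restricted to $\bigcup_M\{\alpha\in P_p(\TPT{n}):\phi_M(\alpha)=\mu\}$, it suffices to prove that for each fixed simple projection $M$,
\[
\inf_{\alpha\in P_p(\TPT{n}):\,\phi_M(\alpha)=\mu} W_{\mathrm{tr},p}(\alpha,\nu)=W_{\mathrm{tr},p}(\mu,\phi_M(\nu));
\]
taking the infimum over $M$ on both sides then yields $W_{\mathrm{tr},p}^+(\mu,\nu)=W_{\mathrm{tr},p}^-(\mu,\nu)$.

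\textbf{The easy inequality and the lift.} The inequality ``$\geq$'' in the displayed identity is immediate from \Cref{prop:NonExpansiveOT}: whenever $\phi_M(\alpha)=\mu$ we have $W_{\mathrm{tr},p}(\mu,\phi_M(\nu))=W_{\mathrm{tr},p}(\phi_M(\alpha),\phi_M(\nu))\leq W_{\mathrm{tr},p}(\alpha,\nu)$. For ``$\leq$'' I would construct an explicit lift of $\nu$. Write $\tilde\nu:=(f_M)_*\nu$ on $\TPT{m}\times F_{\vec 0}$, whose $\TPT{m}$-marginal is $\phi_M(\nu)$; pick an optimal coupling $\gamma\in\Pi(\mu,\phi_M(\nu))$; and apply the gluing lemma over the Polish spaces $\TPT{m}$ and $F_{\vec 0}$ (the latter closed in the Polish space $\TPT{n}$) to obtain $\Gamma\in P(\TPT{m}\times\TPT{m}\times F_{\vec 0})$ whose $(1,2)$-marginal is $\gamma$ and whose $(2,3)$-marginal is $\tilde\nu$. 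Set $\alpha:=(f_M^{-1})_*\big(((x,y,u)\mapsto(x,u))_*\Gamma\big)$. The $\TPT{m}$-marginal of $((x,y,u)\mapsto(x,u))_*\Gamma$ is the first marginal of $\gamma$, namely $\mu$, so $\phi_M(\alpha)=(\phi_M\circ f_M^{-1})_*\big(((x,y,u)\mapsto(x,u))_*\Gamma\big)=(\pi_1)_*\big(((x,y,u)\mapsto(x,u))_*\Gamma\big)=\mu$.

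\textbf{Cost estimate and finiteness.} To bound $W_{\mathrm{tr},p}(\alpha,\nu)$, couple $\alpha$ and $\nu$ by pushing $\Gamma$ forward through $(x,y,u)\mapsto(f_M^{-1}(x,u),f_M^{-1}(y,u))$; its marginals are $\alpha$ and $(f_M^{-1})_*\tilde\nu=\nu$, so it is an admissible plan. The two coupled points $f_M^{-1}(x,u)$ and $f_M^{-1}(y,u)$ have the same fibre coordinate $u$ under $f_M$, so the second summand on the right-hand side of \eqref{eq:metric_equivalence2} vanishes and $\dtr^{n}(f_M^{-1}(x,u),f_M^{-1}(y,u))\leq\dtr^{m}(x,y)$ (indeed, with \Cref{prop:Non-Expansive_Map}, equality holds). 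Integrating the $p$-th power against $\Gamma$ gives $W_{\mathrm{tr},p}(\alpha,\nu)^p\leq\int\dtr^{m}(x,y)^p\,\mathrm{d}\gamma=W_{\mathrm{tr},p}(\mu,\phi_M(\nu))^p$, which establishes ``$\leq$'' and also shows $\alpha\in P_p(\TPT{n})$, as it lies at finite $W_{\mathrm{tr},p}$-distance from $\nu\in P_p(\TPT{n})$. Finiteness of the common value then follows because a simple projection $\TPT{n}\to\TPT{m}$ exists for $n>m$ (e.g.\ $M_{ii}=0$ and $M_{ij}=-\infty$ otherwise), $\phi_M(\nu)\in P_p(\TPT{m})$ by \Cref{cor:FiniteMoments}, and the $p$-Wasserstein distance between two measures of finite $p$-th moment on the Polish space $\TPT{m}$ is finite.

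\textbf{Main obstacle.} The ``$\leq$'' direction is the crux: one must set up the disintegration/gluing carefully so that the lifted object $\alpha$ is a genuine probability measure with $\phi_M(\alpha)=\mu$, and the conceptual point is that transporting only the base coordinate while leaving the fibre coordinate fixed makes the transport cost upstairs coincide with the optimal cost downstairs — which is exactly what the strong metric equivalence \eqref{eq:metric_equivalence1}--\eqref{eq:metric_equivalence2}, together with \Cref{prop:Non-Expansive_Map}, encodes. Everything else is either a direct appeal to \Cref{prop:NonExpansiveOT} and \Cref{cor:FiniteMoments} or a routine rearrangement of infima.
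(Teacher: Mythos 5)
Your proposal is correct and takes essentially the same route as the paper's proof: one inequality from \Cref{prop:NonExpansiveOT}, and the other by gluing a coupling of $\mu$ and $\phi_M(\nu)$ with $(f_M)_*\nu$ over the common $\TPT{m}$-marginal, transporting only the base coordinate so that the cost upstairs equals the cost downstairs via \eqref{eq:metric_equivalence2} and \Cref{prop:Non-Expansive_Map}. The only cosmetic difference is that you establish the identity for each fixed simple projection using an exact optimal coupling and then take the infimum over $M$, whereas the paper works with an $\epsilon$-near-optimal $\beta_{\epsilon}$ and lets $\epsilon \to 0$.
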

	
	\begin{proof}
		The various results in \Cref{sec:simple_projections} enable us to follow the same approach as in Theorem 2.2 of \cite{cai2022distances}.
		
		We begin by proving $W_{\mathrm{tr},p}^- \leq W_{\mathrm{tr},p}^+$. By \Cref{prop:NonExpansiveOT},
		\begin{align*}
			W_{\mathrm{tr},p}^+(\mu,\nu) &= \inf_{\phi \in \mathcal{M}_{\mathrm{tr}}} \{ W_{\mathrm{tr},p}(\alpha, \nu) : \mu = \phi_*(\alpha)\} \\
			&\geq  \inf_{\phi \in \mathcal{M}_{\mathrm{tr}}} \{ W_{\mathrm{tr},p}(\mu, \phi_*(\nu))\}\\
			&= W_{\mathrm{tr},p}^-(\mu,\nu).
		\end{align*}    
		Note that each $W_{\mathrm{tr},p}(\mu, \phi_*(\nu))$ is finite by \Cref{cor:FiniteMoments}. Hence, $W_{\mathrm{tr},p}^+(\mu,\nu)$ is finite. 
		
		In order to show that $W_{\mathrm{tr},p}^-(\mu,\nu) \leq W_{\mathrm{tr},p}^+(\mu,\nu)$, we prove that for all $\epsilon > 0$, and $\beta_{\epsilon} \in \Phi_{\mathrm{tr},p}^-(\nu, m)$ such that $W_p(\mu, \beta_{\epsilon}) \leq W_p^-(\mu,\nu) + \epsilon$, there exists $\alpha_{\epsilon} \in \Phi_{\mathrm{tr}}^+(\mu,n)$ such that $W_{tr,p}(\alpha_{\epsilon},\nu) \leq W_{\mathrm{tr},p}(\mu,\beta_{\epsilon})$. Indeed, if our claim is true, then we have $\forall\ \epsilon>0$:
		\begin{align*}
			W_{\mathrm{tr},p}^+(\mu,\nu) &\leq W_{\mathrm{tr},p}(\alpha_{\epsilon}, \nu) \\
			&\leq W_{\mathrm{tr},p}(\mu,\beta_{\epsilon}) \\
			&\leq W_p^-(\mu,\nu) + \epsilon.
		\end{align*}
		Letting $\epsilon$ tend to $0$ gives us the desired result.
		
		Now let $\phi_M$ be the simple projection sending $\nu$ to $\beta_{\epsilon}$, i.e., $\beta_{\epsilon}=\phi_M(\nu)$. Using the homeomorphism $f_M$ as defined in \Cref{prop:homeomorphism}, we define the complementary projection to $\phi_M$ as $\phi_M^{F_{\vec 0}}$; that is, $\phi_M^{F_{\vec 0}} = \text{proj}_2 \circ f_M$. We then construct $\alpha_{\epsilon}$ by defining a coupling on $\TPT{n} \times \TPT{n}$ with marginals $\nu,\alpha_{\epsilon}$. We first consider $\TPT{m} \times F_{\vec 0} \times \TPT{m}$ and look to apply the gluing lemma to $f_M(\nu)$ and $\pi_m$. 
		
		\begin{lemma}{\cite[Chapter 1]{villani2009optimal}}
			Let $(\mathcal{X}_i, \mu_i)$, $i=1,2,3$ be Polish spaces. Let $(X_1,X_2)$ be a coupling of $(\mu_1,\mu_2)$ and $(X_2,X_3)$ be a coupling of $(\mu_2,\mu_3)$. Then there exists a coupling of random variables $(Z_1,Z_2,Z_3)$ such that $(Z_1,Z_2) \sim (X_1,X_2)$ and $(Z_2,Z_3) \sim (X_2,X_3)$.
		\end{lemma}
		
		The space $\TPT{m}$ is Polish, while $F_{\vec 0}$ is a closed subset of a Polish space and is therefore also Polish. Hence, by the gluing lemma, there exist random vectors $(X,Y,Z)$ on $\TPT{m} \times F_{\vec 0} \times \TPT{m}$ such that $(X,Y) \sim f_M(\nu)$ and $(X,Z) \sim \pi_m$. We call the distribution of $(X,Y,Z) \sim \tilde \pi$.
		
		We now define $\rho: \TPT{m} \times F_{\vec 0} \times \TPT{m} \rightarrow (\TPT{n})^2$ given by
		\[
		\rho(\vec x,\vec y,\vec z) \rightarrow (f_M^{-1}(\vec x,\vec y),\, f_M^{-1}(\vec z, \vec y)).
		\]
		This is a measurable map as $f_M^{-1}$ is continuous.
		
		We then define a coupling $\pi_n$ on $(\TPT{n})^2$ and measure $\alpha_{\epsilon}$ by
		$\pi_n = \rho(\tilde \pi)$ and $
		\alpha_{\epsilon} = \text{proj}_2(\pi_n)$.
		
		It remains to show that $\pi_n$ is a coupling of $\nu,\alpha_{\epsilon}$, the distance $W_{\mathrm{tr},p}(\alpha_{\epsilon},\nu)$ is bounded by $W_{\mathrm{tr},p}(\mu, \beta_{\epsilon})$, and that $\alpha_{\epsilon} \in \Phi^+(\mu,n)$.
		
		Since $\alpha_{\epsilon}$ is the second marginal of $\pi_n$ by definition, it suffices to compute the first marginal:
			$$
			\pi_n(A, \TPT{n}) = \tilde\pi(\rho^{-1}(A,\TPT{n}))= \tilde\pi(f_M(A),\TPT{m}) = \nu(A).
			$$
		We now want to bound $W_{\mathrm{tr},p}(\alpha_{\epsilon},\nu)$ using $\pi_n$. We begin by showing that for any $(\vec x,\vec y)\in \TPT{m} \times F_{\vec 0}$ and $\vec z\in\TPT{m}$, we have  
		$$
		\dtr^{n}(\rho(\vec x,\vec y,\vec z)) = \dtr^{m}(\vec x, \vec z).
		$$
		We obtain the lower bound on $\dtr^{n}(\rho(\vec x,\vec y,\vec z))$ through the following argument:
		\begin{align*}
			\dtr^{n}(\rho(\vec x,\vec y,\vec z)) &=\dtr^{n}(f_M^{-1}(\vec x,\vec y),\, f_M^{-1}(\vec z, \vec y)) \\
			&\geq \dtr^{m}(\phi_M(f_M^{-1}(\vec x, \vec y)),\, \phi_M(f_M^{-1}(\vec z, \vec y))) \\
			&= \dtr^{m}(\vec x, \vec z),
		\end{align*}
		in which we used \Cref{prop:Non-Expansive_Map} for the inequality, while the equalities come from the definitions of $\rho$ and $f_M$, respectively.
		Using the upper bound of \eqref{eq:metric_equivalence} gives the upper bound:
		\begin{align*}
			\dtr^{n}(\rho(\vec x,\vec y,\vec z)) &=\dtr^{n}(f_M^{-1}(\vec x,\vec y),\, f_M^{-1}(\vec z, \vec y)) \\
			&\leq \dtr^{m}(\vec x, \vec z) + \dtr^{n}(\phi_M^{F_\vec 0}(f_M^{-1}(\vec x,\vec y)),\, \phi_M^{F_\vec 0}(f_M^{-1}(\vec z,\vec y))) \\
			&= \dtr^{m}(\vec x, \vec z) + \dtr^{n}(\vec y, \vec y)= \dtr^{m}(\vec x, \vec z).
		\end{align*}
		This gives us the following upper bound for $W_{\mathrm{tr},p}(\alpha_{\epsilon},\nu)$:
		\begin{align*}
			W_{\mathrm{tr},p}(\alpha_{\epsilon},\nu)^p &\leq \E_{U,V\sim \pi_n}[\dtr^{n}(U,V)^p] \\
			&= \E_{X,Y,Z \sim \tilde\pi}[\dtr^{n}(\rho(X,Y,Z))^p]\\
			&= \E_{X,Y,Z \sim \tilde\pi}[\dtr^{m}(X,Z)^p] \\
			&= \E_{X,Z \sim \pi_m}[\dtr^{m}(X,Z)^p]\\
			&= W_{\mathrm{tr},p}(\mu,\beta_{\epsilon})^p.
		\end{align*}
		Finally, it only remains to show that $\alpha_{\epsilon} \in \Phi^+(\mu,n)$, i.e., $\phi_M(\alpha_{\epsilon}) = \mu$:
		\begin{align*}
			\phi_M(\alpha_{\epsilon})(A) &= \pi_n(\TPT{n},\, \phi_M^{-1}(A))\\
			&= \tilde\pi(\rho^{-1}(\TPT{n},\, \phi_M^{-1}(A)))\\
			&= \tilde\pi(\TPT{m}, F_{\vec 0}, A) \\
			&= \pi_m(\TPT{m},A) \\
			&= \mu(A).
		\end{align*}
		
	\end{proof}
	
	Given this equivalence, from now on, we now denote $W_{\mathrm{tr},p}^-,W_{\mathrm{tr},p}^+$ by $W_{\mathrm{tr},p}^{m,n}$.
	
	\begin{remark}\label{rmk:trop_remarks}
		We stress that this proof only relies on $\phi \in \mathcal{M}$ being a simple projection, and not $\mathcal{M}_{\mathrm{tr}}$ exhibiting any closure. We can therefore define $\Phi_{\mathrm{tr}}^{\pm}, W_{\mathrm{tr},p}^{m,n}$ with respect to any subset of maps $\mathcal{M}_{\mathrm{tr}}' \subset \mathcal{M}_{\mathrm{tr}}$ and reach the same result.
		
		However, it is important to note that the quantity $W_{\mathrm{tr},p}^{m,n}(\mu,\nu)$ is not a formal metric. Indeed, we can have $W_{\mathrm{tr},p}^{m,n}(\mu,\nu)=0$ for $\mu\neq\nu$ if, say, $\phi\in\mathcal{M}_{\mathrm{tr}}$, such that $\nu=\phi_*(\mu)$. 
		
	\end{remark}
	
	\begin{proposition} \label{prop:consistency}
		The distance $W_{\mathrm{tr},p}^{m,n}$ is 1-Lipschitz continuous on $(P_p(\TPT{m}), W_{\mathrm{tr},p}) \times (P_p(\TPT{n}), W_{\mathrm{tr},p})$. 
	\end{proposition}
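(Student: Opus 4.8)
The plan is to reduce everything to the triangle inequality for $W_{\mathrm{tr},p}$ together with the non-expansiveness statement of \Cref{prop:NonExpansiveOT}. Concretely, fix $\mu_1,\mu_2 \in P_p(\TPT{m})$ and $\nu_1,\nu_2 \in P_p(\TPT{n})$; the goal is the bound
\[
\bigl| W_{\mathrm{tr},p}^{\pm}(\mu_1,\nu_1) - W_{\mathrm{tr},p}^{\pm}(\mu_2,\nu_2) \bigr| \leq W_{\mathrm{tr},p}(\mu_1,\mu_2) + W_{\mathrm{tr},p}(\nu_1,\nu_2),
\]
which is exactly $1$-Lipschitz continuity for the product metric (and the left-hand side is finite by \Cref{thm:TropicalMainThm}). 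Since $W_{\mathrm{tr},p}^{\pm} = W_{\mathrm{tr},p}^-$, I would work with the projection form $W_{\mathrm{tr},p}^-(\mu,\nu) = \inf_{\phi \in \mathcal{M}_{\mathrm{tr}}} W_{\mathrm{tr},p}(\mu, \phi(\nu))$.

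First I would fix $\epsilon > 0$ and choose a simple projection $\phi \in \mathcal{M}_{\mathrm{tr}}$ that is $\epsilon$-optimal for the pair $(\mu_2,\nu_2)$, i.e. $W_{\mathrm{tr},p}(\mu_2, \phi(\nu_2)) \leq W_{\mathrm{tr},p}^-(\mu_2,\nu_2) + \epsilon$. All four measures $\mu_1,\mu_2$ and $\phi(\nu_1),\phi(\nu_2)$ lie in $P_p(\TPT{m})$ --- the latter two by \Cref{cor:FiniteMoments} --- so $(P_p(\TPT{m}), W_{\mathrm{tr},p})$ is a genuine metric space on which the triangle inequality holds (this is where $p \geq 1$ enters). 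Applying it successively through $\mu_2$ and then through $\phi(\nu_2)$ gives
\[
W_{\mathrm{tr},p}^-(\mu_1,\nu_1) \leq W_{\mathrm{tr},p}(\mu_1, \phi(\nu_1)) \leq W_{\mathrm{tr},p}(\mu_1,\mu_2) + W_{\mathrm{tr},p}(\mu_2,\phi(\nu_2)) + W_{\mathrm{tr},p}(\phi(\nu_2), \phi(\nu_1)).
\]
I would then bound the last summand by \Cref{prop:NonExpansiveOT}, which gives $W_{\mathrm{tr},p}(\phi(\nu_2),\phi(\nu_1)) \leq W_{\mathrm{tr},p}(\nu_1,\nu_2)$. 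Combining this with the $\epsilon$-optimality of $\phi$ and letting $\epsilon \to 0$ yields $W_{\mathrm{tr},p}^-(\mu_1,\nu_1) - W_{\mathrm{tr},p}^-(\mu_2,\nu_2) \leq W_{\mathrm{tr},p}(\mu_1,\mu_2) + W_{\mathrm{tr},p}(\nu_1,\nu_2)$.

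Finally I would observe that the whole estimate is symmetric under swapping the indices $1 \leftrightarrow 2$, which produces the matching lower bound and hence the absolute-value inequality. The only point requiring a little care --- rather than a genuine obstacle --- is that the infimum defining $W_{\mathrm{tr},p}^-$ need not be attained, which is precisely why an $\epsilon$-optimal $\phi$ is introduced and why it must be chosen adapted to the pair $(\mu_2,\nu_2)$ so that the \emph{same} $\phi$ can be reused to upper bound $W_{\mathrm{tr},p}^-(\mu_1,\nu_1)$; everything else is the metric triangle inequality in $P_p(\TPT{m})$ together with \Cref{prop:NonExpansiveOT} and the finiteness from \Cref{thm:TropicalMainThm}.
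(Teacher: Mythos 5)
Your proposal is correct and follows essentially the same route as the paper: triangle inequality in $P_p(\TPT{m})$ through $\mu_2$ and $\phi(\nu_2)$, then \Cref{prop:NonExpansiveOT} to bound $W_{\mathrm{tr},p}(\phi(\nu_2),\phi(\nu_1))$ by $W_{\mathrm{tr},p}(\nu_1,\nu_2)$, then pass to the infimum. If anything, your handling of the infimum via an $\epsilon$-optimal $\phi$ and the symmetry argument for the absolute value is more careful than the paper's displayed chain of inequalities, which manipulates the infima inside absolute values somewhat loosely.
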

	
	\begin{proof}
		\begin{align*}
			|W_{\mathrm{tr},p}^{m,n}(\mu_1,\nu_1) -& W_{\mathrm{tr},p}^{m,n}(\mu_2,\nu_2)| = \left|\inf_{\phi \in \mathcal{M}_{\mathrm{tr}}} W_{\mathrm{tr},p}(\mu_1,\phi_*(\nu_1)) - \inf_{\phi \in \mathcal{M}_{\mathrm{tr}}} W_{\mathrm{tr},p}(\mu_2,\phi_*(\nu_2))\right| \\
			\leq&|\inf_{\phi \in \mathcal{M}_{\mathrm{tr}}} \left[ W_{\mathrm{tr},p}(\mu_1,\mu_2) + W_{\mathrm{tr},p}(\mu_2,\phi_*(\nu_2)) + W_{\mathrm{tr},p}(\phi_*(\nu_2),\phi_*(\nu_1)) \right] \\
			&- \inf_{\phi \in \mathcal{M}_{\mathrm{tr}}} W_{\mathrm{tr},p}(\mu_2,\phi_*(\nu_2)) | \\ 
			\leq& |\inf_{\phi \in \mathcal{M}_{\mathrm{tr}}} \left[W_{\mathrm{tr},p}(\mu_1,\mu_2) + W_{\mathrm{tr},p}(\mu_2,\phi_*(\nu_2)) + W_{\mathrm{tr},p}(\phi_*(\nu_2),\phi_*(\nu_1)) \right] \\
			&- \inf_{\phi \in \mathcal{M}_{\mathrm{tr}}} W_{\mathrm{t}r,p}(\mu_2,\phi_*(\nu_2))| \\ 
			\leq& |W_{\mathrm{tr},p}(\mu_1,\mu_2) + W_{\mathrm{tr},p}(\phi_*(\nu_2),\phi_*(\nu_1))\\
			&+ \inf_{\phi \in \mathcal{M}_{\mathrm{tr}}} \left[W_{\mathrm{tr},p}(\mu_2,\phi_*(\nu_2)) \right] - \inf_{\phi \in \mathcal{M}_{\mathrm{tr}}} W_{\mathrm{tr},p}(\mu_2,\phi_*(\nu_2))| \\
			&\leq |W_{\mathrm{tr},p}(\mu_1,\mu_2) + W_{\mathrm{tr},p}(\nu_2,\nu_1)| \\
			&\leq W_{\mathrm{tr},p}(\mu_1,\mu_2) + W_{\mathrm{tr},p}(\nu_2,\nu_1).
		\end{align*}
	\end{proof}
	
	This in turn gives the following implication for estimation of $W_{\mathrm{tr},p}^{m,n}(\mu,\nu)$.
	
	\begin{corollary}
		Let $X=\{\vec x_1,\dots,\vec x_r\}\subset\TPT{n}$ and $Y=\{\vec y_1,\dots,\vec y_s\}\subset\TPT{m}$ sampled independently from $\mu \in P_p(\TPT{m})$ and $\nu \in P_p(\TPT{n})$, respectively. Consider $\hat \mu_r$ and $\hat \nu_s$, the empirical measures of $X$ and $Y$.Then $\widehat W_{\mathrm{tr},p}^{m,n} = W_{\mathrm{tr},p}^{m,n}(\hat \mu_r, \hat \nu_s)$ is a consistent estimator for $W_{\mathrm{tr},p}^{m,n}(\mu,\nu)$.
	\end{corollary}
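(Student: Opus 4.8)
The plan is to obtain the corollary as an immediate consequence of the $1$-Lipschitz continuity of $W_{\mathrm{tr},p}^{\pm}$ proved in \Cref{prop:consistency}, combined with the classical fact that the empirical measure of an i.i.d.\ sample converges to the population law in the $p$-Wasserstein metric on a Polish space. (We note that, as written, the sample $X$ is drawn from $\mu\in P_p(\TPT{m})$ and $Y$ from $\nu\in P_p(\TPT{n})$, so that $X\subset\TPT{m}$ and $Y\subset\TPT{n}$; this does not affect the argument.)

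First I would record the two standing facts we need. The tropical projective tori $(\TPT{m},\dtr)$ and $(\TPT{n},\dtr)$ are Polish (as noted in \Cref{sec:wass}), and $\mu,\nu$ have finite $p$th moments by hypothesis. On $P_p$ of a Polish space, $W_p$-convergence is equivalent to weak convergence together with convergence of $p$th moments; the empirical measures converge weakly to the population law almost surely, and, applying the strong law of large numbers to $\dtr(\cdot,\vec 0)^p$, their $p$th moments also converge almost surely. Hence, citing, e.g., \cite{villani2009optimal},
\[
W_{\mathrm{tr},p}(\hat\mu_r,\mu)\xrightarrow{\;r\to\infty\;}0
\qquad\text{and}\qquad
W_{\mathrm{tr},p}(\hat\nu_s,\nu)\xrightarrow{\;s\to\infty\;}0
\]
almost surely.

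Next I would invoke \Cref{prop:consistency} with $(\mu_1,\nu_1)=(\hat\mu_r,\hat\nu_s)$ and $(\mu_2,\nu_2)=(\mu,\nu)$, which gives
\[
\bigl|\widehat W_{\mathrm{tr},p}^{\pm}-W_{\mathrm{tr},p}^{\pm}(\mu,\nu)\bigr|
=\bigl|W_{\mathrm{tr},p}^{\pm}(\hat\mu_r,\hat\nu_s)-W_{\mathrm{tr},p}^{\pm}(\mu,\nu)\bigr|
\leq W_{\mathrm{tr},p}(\hat\mu_r,\mu)+W_{\mathrm{tr},p}(\hat\nu_s,\nu).
\]
Letting $r,s\to\infty$, the right-hand side tends to $0$ almost surely, so $\widehat W_{\mathrm{tr},p}^{\pm}\to W_{\mathrm{tr},p}^{\pm}(\mu,\nu)$ almost surely; in particular $\widehat W_{\mathrm{tr},p}^{\pm}$ is a consistent estimator, as claimed.

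The only point requiring care is verifying that the Wasserstein law of large numbers genuinely applies in this setting, i.e.\ that $(\TPT{n},\dtr)$ is separable and completely metrizable and that the $P_p$ hypothesis supplies the moment control needed for $W_p$-convergence of empirical measures; both facts are already available in the paper. Once they are in hand, the statement reduces to the triangle-inequality bookkeeping above, so there is no substantial obstacle.
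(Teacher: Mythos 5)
Your proof is correct and follows essentially the same route as the paper's: combine the $1$-Lipschitz bound of \Cref{prop:consistency} with the almost sure convergence of empirical measures in the $p$-Wasserstein distance on the Polish spaces $\TPT{m}$ and $\TPT{n}$. If anything, you are slightly more careful than the paper's terse argument, which cites only weak convergence, whereas you also note the convergence of $p$th moments needed to upgrade weak convergence to convergence in $W_{\mathrm{tr},p}$.
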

	
	\begin{proof}
		By \Cref{prop:consistency} and the fact that Wasserstein distances respect weak convergence of measures \citep[e.g.,][Theorem 6.9]{villani2009optimal}, we only require the weak convergence of $\hat \mu_r$(/$\hat \nu_s$) to $\mu$(/$\nu$) as $r$(/$s$) tend to infinity. This is true by the law of large numbers as $\TPT{n}$ is a Polish space \citep{monod2018StatPerspective}.
	\end{proof}
	
	In this section we have defined simple projection matrices, the most general tropically linear maps through which we can project and embed measures while preserving non-expansivity and co-dimension. We have shown that the projection and embedding Wasserstein distances (defined with respect to simple projections) coincide as in the Euclidean case, despite the more complex geometry of the tropical projective torus. Furthermore, we have shown that this Wasserstein distance is Lipschitz, and so can be estimated consistently using samples from measures $\mu, \nu$.
	
	\subsection{Open Problems and Implications: Phylogenetic Trees}\label{subsec:open_problems}
	
	To conclude this section, we return to the application of interest which motivated our work --- tropical phylogenetic tree space.  We have constructed projection and embedding Wasserstein distances between probability measures in tropical projective tori of different dimensions, and proved that the projection and embedding distances are equivalent. We would like to be able to use this tool to compare probability distributions over phylogenetic tree spaces --- which are subspaces within tropical projective tori --- where the numbers of leaves in the trees differ.  There are, however, important theoretical considerations to take into account which are specific to phylogenetic trees.
	
	The tropical space of phylogenetic trees with $N$ leaves is characterised by those points in $\TPT{N(N-1)/2}$ satisfying the four-point condition \citep{buneman1974note}.
	
	\begin{definition}[Four-Point Condition]
		Let $d$ be a metric on $N$ points. Then $d$ corresponds to the graph distance on some tree with $N$ leaves if and only if, $\forall i,j,k,l \in [N]$, the maximum of the following three terms is achieved twice:
		\[
		d(i,j)+d(k,l), \qquad d(i,k)+d(j,l), \qquad d(i,l)+d(j,k).
		\]
	\end{definition}
	
	\begin{definition}[Tropical Tree Space]
		The tropical tree space on $N$ leaves, denoted by $\mathcal{T}_N$, is the set of vectors $\vec x \in \TPT{N(N-1)/2}$ such that $\{x_{ij}\}_{i\neq j \in [N]}$ satisfies the four-point condition. 
	\end{definition}
	
	In light of these definitions, when calculating Wasserstein distances between measures which are supported on tropical tree space, it is natural to ask; which simple projections map tropical tree space into tropical tree space? Do such maps necessarily minimise the Wasserstein distance? The advantages of studying these simple projections are two-fold; we may be able to speed up computations by computing an infimum over a subset of simple projections rather than over all simple projections, and a theoretical understanding of such maps may allow us to interpret the optimal simple projection as interpretable relations between the two leaf sets. Formally, we pose these questions in the following way:
	
	\begin{open-prob}\label{op:parameterisation}
		Let $\mathcal{MT_{\tr}}$ be the set of simple projections $M$ such that, for all $\vec x \in \TPT{N(N-1)/2}$ satisfying the four-point condition, $M\vec x \in \TPT{M(M-1)/2}$ satisfies the four-point condition. Can we find a parametric description of the set $\mathcal{MT_{\tr}}$?
	\end{open-prob}
	\begin{open-prob}\label{op:metric-minimisation}
		Suppose $\mu \in P_p(\mathcal{T_N})$ and $\nu \in P_p(\mathcal{T_M})$. Is it true that
		\[
		W_{\mathrm{tr},p}^{m,n}(\mu, \nu) \coloneqq \inf_{M \in \mathcal{M_{\tr}}} W_{\tr,p}(M\mu, \nu) = \inf_{M \in \mathcal{MT_{\tr}}} W_{\tr,p}(M\mu, \nu)?
		\]
	\end{open-prob}
	
	A natural approach to \Cref{op:parameterisation} may involve utilising the deep connections between tropical geometry and matroid theory \citep{ardila2006bergman}, which is beyond the scope of this paper. An affirmative proof of \Cref{op:metric-minimisation} would most likely require the parametrisation of $\mathcal{MT}_{\tr}$ from \Cref{op:parameterisation}, but it may be possible to directly construct a counter-example by exploiting the lack of tropical convexity of tropical tree space.
	
	It may also be useful to first address simpler versions of \Cref{op:parameterisation,op:metric-minimisation} by restricting to the case of \emph{ultrametric trees}. These are special cases of phylogenetic trees which are \emph{equidistant}; they are rooted phylogenetic trees where the distance from every leaf to its root is a constant. The set of such trees forms a tropical linear space, and hence is tropically convex \citep{lin2017convexity}. This convexity has been recently used to characterise the combinatorial behaviour of tree topologies in the tropical setting \citep{lin2022tropical}. The \Cref{op:parameterisation,op:metric-minimisation} can also be posed with respect to ultrametric tree space rather than general tropical tree space, and may prove to be more tractable by utilising the tropical convexity of ultrametric space.
	
	\section{Computation}\label{sec:computation}
	
	Having defined tropical projection and embedding Wasserstein distances with sufficiently nice theoretical properties, in this section we demonstrate how existing optimisation techniques can be used for the computation of $W_{\mathrm{tr},p}^{m,n}$. The computation of $W_{\mathrm{tr},p}^{m,n}$ is not a simple optimisation problem, as we require a minimal Wasserstein distance over the set of simple projections, which is a disconnected set. \Cref{lemma:optimser} shows that we can separate this problem into discrete and continuous parts by expressing $W_{\mathrm{tr},p}^{m,n}$ as a minimisation problem over three components; the support of real entries in the simple projection matrix, the exact real entries of our matrix, and the joint weights $\lambda_{r,s}$ between the data points $\vec x^r$ and $\vec y^s$. The proof of this lemma can be found in \Cref{appsec:proofs}.
	
	\begin{lemma}\label{lemma:optimser}
		Let $X = \{\vec x\}_{r \in [R]}$ and $Y = \{\vec y\}_{s \in [S]}$ be finite datasets in $\TPT{n}$ and $\TPT{m}$ respectively. The Wasserstein distance $W_{\mathrm{tr},p}^{m,n}(X,Y)$ is the infimum of the function
		\begin{equation}\label{eq:objective}
			f(M,\lambda, \vec t) =  \left(\sum_{r,s} \lambda_{r,s} \| M( \mathbf{x^{(r)}} + \mathbf{t}) - \mathbf{y}\|_{tr}^p \right)^{1/p}
		\end{equation}
		
		where $\vec t \in \TPT{N}$, $\lambda$ is a joint probability matrix with uniform marginals on $[R]$ and $[S]$, and $M \in \mathcal{M_{\tr}'}$, where $\mathcal{M_{\tr}'}$ is the set of simple projections with all real entries equal to zero and exactly one real entry per column.
	\end{lemma}
	
	In this section we first outline the existing optimisation methods which can be used to optimise over $M, \lambda$, and $\vec t$ independently, before combining these methods to present our coordinate descent algorithm (\Cref{algo:minimisation}) for the computation of $W_{\mathrm{tr},p}^{m,n}(X,Y)$. We then apply this algorithm to simulated branching process data, coalescent data, and Gaussian data. To conclude, we compute the Wasserstein distances between phylogenetic influenza datasets across different annual seasons. The full results of numerical experiments are presented in \Cref{appsec:gradient_methods,appsec:simulated_annealing}; the implementation for these experiments and the computations on simulated and real data are available in the GitHub repository at \url{https://github.com/Roroast/WDDD}. 
	
	\subsection{Algorithm}
	
	We take a coordinate descent approach to the minimisation of the objective given in \Cref{eq:objective}, iteratively minimising a single component while fixing the others. To optimise the shift component $\vec t$, we will consider gradient methods as $f$ is piece-wise polynomial in terms of $\vec t$. We will implement simulated annealing to solve the discrete optimisation problem of the optimal matrix support $M$, while optimising over $\lambda$ is a classic discrete optimal transport problem, which is a linear program with efficient solvers \citep{flamary2021pot}. In the following subsections we review various options for the gradient based optimisation of $\vec t$ and possible graph structures for simulated annealing in $M$, with full numerical experiments to compare the performances of various methods in \Cref{appsec:gradient_methods,appsec:simulated_annealing}.
	
	\subsubsection{Gradient Methods and Learning Rate}\label{subsubsec:shift_search} The objective function given in \Cref{eq:objective} --- for fixed $M, \lambda$ --- is piecewise polynomial in $\vec t$ with regions of sparse gradient. This raises what is known as the zig-zagging problem; when outside the tropical convex hull of the data, the sparse gradient of $f$ can cause gradient methods to zig-zag across a ``valley'' of local minima. This behaviour is explored by \cite{talbut2024descent}, where it is shown that tropical gradient methods can achieve smaller error on average and avoid convergence to local (non-global) minima.
	
	In this paper we consider classical gradient descent (CD), tropical gradient descent (TD), Adamax (CA) and TrAdamax (TA) for the optimisation of $\vec t$, and provide an outline of these methods below. For a detailed explanation and motivation for their implementation, see \cite{talbut2024descent}. 
	
	\emph{Classical gradient descent(CD)} follows the steepest descent algorithm with respect to the Euclidean norm. This is given by the following iterative rule:
	\begin{equation}\label{eq:C_descent_direction}
		\vec t_{k+1} = \vec t_{k} - \alpha_{k} \vec d_{k}, \qquad \alpha_k = \alpha k^{-1/2} \| \nabla f(\vec t_k) \|, \qquad \vec d_k =\nabla f(\vec t_k).
	\end{equation}
	
	\emph{Tropical gradient descent(TD)} follows the steepest descent algorithm with respect to the tropical norm, which leads to the following iterative rule:
	\begin{equation}\label{eq:T_descent_direction}
		\vec t_{k+1} = \vec t_{k} - \alpha_{k} \vec d_{k}, \qquad \alpha_k = \alpha k^{-1/2} \| \nabla f(\vec t_k) \|_{\tr}, \qquad \vec d_k =\mathbf{1}_{\nabla f(\vec t_k) > 0}.
	\end{equation}
	
	\emph{Adamax(CA)} is a variant of the Adam algorithm \citep{kingma2014adam}, which uses exponentially weighted moments of the gradient to act as momentum terms. These momentum terms can independently adapt step sizes in each coordinate direction appropriately. While Adam uses $L^2$ moments of the gradient, Adamax uses $L^{\infty}$ moments and is shown to behave better than Adam in tropical optimisation problems \citep{talbut2024descent}.
	
	\emph{TrAdamax(TA)} is a heuristic adaptation of the Adamax algorithm, which uses exponentially weighted moments of the tropical descent directions $\mathbf{1}_{\nabla f(\vec t_k) > 0}$, rather than the gradient $\nabla f(\vec t_k)$.
	
	\begin{table}[h!]
		\centering
		\begin{tabular}{|c|c|c|r|r|r|r|r|r|r|r|}
			\hline
			& \multicolumn{10}{|c|}{Learning rate, $\alpha$} \\
			\hline
			Grad &  $e^{-5}$  &  $e^{-4}$  &  $e^{-3}$  &  $e^{-2}$  &  $e^{-1}$  &  $e^{0}$  &  $e^{1}$  &  $e^{2}$  &  $e^{3}$ & $e^{4}$ \\
			\hline
			CA   &      2.047 &      1.086 &      1.074 &      \textbf{1.070} &      1.073 &      1.092 &      1.201 &      1.427 &      1.932 &      3.192 \\
			CD   &      2.657 &      2.179 &      1.485 &      1.101 &      1.083 &      1.078 &      1.075 &      \textbf{1.073} &      1.123 &      1.803 \\
			TA   &      2.514 &      1.759 &      1.072 &      1.067 &      \textbf{1.066} &      1.070 &      1.110 &      1.220 &      1.693 &      3.085 \\
			TD   &      2.590 &      1.948 &      1.180 &      1.078 &      1.074 &      1.071 &      \textbf{1.066} &      1.068 &      1.174 &      1.719 \\
			\hline
		\end{tabular}
		\caption{The mean loss $f$ over 10 initialisations, 9 sample distributions and 6 dimension pairs $m \leq n \in \{ 6, 10, 15 \}$ achieved by classical Adam, classical descent, TrAdamax, and tropical descent. Learning rate is varied on a linear log scale from $e^{-5}, e^{4}$.}
		\label{tab:lr_tuning_averages}
	\end{table}
	
	Using data of varying dimensionality and distribution, we implemented CA, CD, TA and TD for the minimisation of the loss function $f$ with respect to $\vec t$. We present the full methodology and results in \Cref{appsec:gradient_methods}, while \Cref{tab:lr_tuning_averages} presents the mean loss over all data types. We see that TA (with learning rate $\alpha = e^{-1}$) achieves the best mean loss. The optimal TA learning rates for each data dimension pair is presented in \Cref{tab:trained_lr}.
	
	\begin{table}[h!]
		\centering
		\begin{tabular}{|l|rrrrrr|}
			\hline
			n  & 6 & 10 & 10& 15 & 15 & 15 \\
			\hline
			m  & 6 &  6 & 10 &  6& 10 & 15 \\
			\hline
			Learning rate, $\alpha$ & 0.050 &  1.000 &  0.135 &  0.368 &  0.368 &  0.135 \\
			\hline
		\end{tabular}
		\caption{The TA learning rate $\alpha$ achieving the minimal mean loss $f$ over 10 initialisations and 9 sample distributions for each dimension pairs $m \leq n \in \{ 6, 10, 15 \}$.}
		\label{tab:trained_lr}
	\end{table}
	
	\subsubsection{Simulated Annealing Graph and Scale}\label{subsubsec:supp_search}
	
	The most intensive challenge in minimising the objective \Cref{eq:objective} is the combinatorial problem of identifying the optimal support of our matrix map. As shown in \Cref{lemma:optimser}, it suffices to consider matrices with exactly one real entry in each column as we are computing on finite datasets. We can identify this set of supports with the set of surjections $[n] \rightarrow [m]$, so the size of this set is given explicitly by $m!S(n,m)$, where  $S(n,m)$ is the Stirling number of the second kind. \Cref{tab:support count} shows these values for $m \leq n \leq 6$.
	
	\begin{table}[ht!]
		\centering
		\begin{tabular}{c|c|c|c|c|c|c}
			\backslashbox{$n$}{$m$}& 1 & 2 & 3 & 4 & 5 & 6 \\
			\hline
			1 & 1 &  &  &  &  & \\
			2 & 1 & 2 &  &  &  & \\
			3 & 1 & 6 & 6 &  &  & \\
			4 & 1 & 14 & 36 & 24 &  & \\
			5 & 1 & 30 & 150 & 240 & 120 & \\
			6 & 1 & 62 & 540 & 1560 & 1800 & 720 \\
		\end{tabular}
		\caption{The number of possible matrix supports of $n \times m$ simple projections.}
		\label{tab:support count}
	\end{table}
	
	Due to the exponential growth of the number of supports, we implement simulated annealing \citep{kirkpatrick1983optimization} rather than an exhaustive search to find the optimal support. Simulated annealing is a Metropolis procedure on the set of supports, where we perform a random walk and accept any step which decreases $f$, while accepting increasing steps with probability $\exp( - \Delta f / T(k))$, where $T$ is linearly decreasing in the step number $k$.
	
	To implement simulated annealing, we require a graph structure on our set of possible supports $G = (\mathcal{M_{\tr}'},E)$; we shall consider $E$ to contain exactly those edges between two supports $M_1, M_2$ whose matrices differ in exactly one column or are the same up to column permutations. For any dimensions $n,m$, this results in a connected graph of supports. As a benchmark, we compare the computational performance of this graph $G$ with a complete graph $K$. We also optimise over the scale factor $\sigma$ of the linearly decreasing $T(k)$. The full methodology and results are in \Cref{appsec:simulated_annealing}, while \Cref{tab:scale_tuning_averages} summarises the mean loss of simulated annealing with respect to the graph structure. The incomplete graph $G$ outperforms the complete graph $K$, achieving minimal mean loss with a scale of $e^{-2}$.
	
	\begin{table}[ht]
		\centering
		\begin{tabular}{|l|l|l|r|r|r|r|r|r|r|r|}
			\hline
			& \multicolumn{10}{|c|}{Scale, $\sigma$} \\
			\hline
			Graph &$e^{-7}$  & $e^{-6}$  &  $e^{-5}$  &  $e^{-4}$  &  $e^{-3}$  &  $e^{-2}$  &  $e^{-1}$  &  $e^{0}$  &  $e^{1}$  &  $e^{2}$ \\
			\hline
			K   &     1.066 &     1.065 &     1.064 &     1.064 &     1.066 &     1.062 &     \textbf{1.052} &     1.148 &     1.560 &     1.606 \\
			G	&     1.053 &     1.053 &     1.053 &     1.052 &     1.044 &     \textbf{1.033} &     1.101 &     1.329 &     1.495 &     1.587 \\
			\hline
		\end{tabular}
		\caption{The mean loss over 10 initialisations, 9 sample distributions and 6 dimension pairs $m \leq n \in \{ 6, 10, 15 \}$ achieved by simulated annealing on the $G$ and $K$ graph structures. Scale is varied on a linear log scale from $e^{-7}, e^{2}$.}
		\label{tab:scale_tuning_averages}
	\end{table}
	
	\begin{table}[h!]
		\centering
		\begin{tabular}{|l|rrrrrr|}
			\hline
			n  & 6 & 10 & 10& 15 & 15 & 15 \\
			\hline
			m  & 6 &  6 & 10 &  6& 10 & 15 \\
			\hline
			Scale, $\sigma$ & 0.368 &  0.135 &  0.135 &  0.135 &  0.135 &  0.135 \\
			\hline
		\end{tabular}
		\caption{The simulated annealing scale $\sigma$ achieving the minimal mean loss $f$ over 10 initialisations and 9 sample distributions for each dimension pairs $m \leq n \in \{ 6, 10, 15 \}$.}
		\label{tab:trained_scale}
	\end{table}
	
	\subsubsection{Coordinate Descent Algorithm}
	
	Informed by the numerical experiments presented in \Cref{appsec:gradient_methods,appsec:simulated_annealing}, our complete algorithm for the minimisation of the objective \Cref{eq:objective} is given by \Cref{algo:minimisation}, with the default hyper-parameters given by the optimal learning rate $\alpha$ and scale $\sigma$ shown in \Cref{tab:trained_lr,tab:trained_scale}. To solve the optimal transport linear program for the optimal $\lambda$, we use the POT Python package \citep{flamary2021pot}.
	
	\begin{algorithm}
		\caption{Minimisation}\label{algo:minimisation}
		\begin{algorithmic}
			\Require $\sigma, \alpha, \epsilon > 0, \beta_1, \beta_2 \in [0,1], i_{\max}, j_{\max}, k_{\max} \in \N$
			\State Initialise $\vec t_{00}, M_{00}, \lambda_{0}$
			\State $k \gets 0$
			\While{$k < k_{\max}$}
			\State $k \gets k+1$
			\State $i \gets 1$
			\State $j \gets 1$
			\While{$i < i_{\max}$} \Comment{Perform TrAdamax}
			\State $\vec g_{ki} \gets \nabla f(\vec t_{k(i-1)})$;
			\State $\vec d_{ki} \gets \mathbf{1}_{\vec g_{ki} > 0} * (\max(\vec g_{ki}) - \min(\vec g_{ki}))$;
			\State $\vec m_{ki} \gets \beta_1 * \vec m_{k(i-1)} + (1-\beta_1) * \vec d_{ki}$;
			\State $\vec u_{ki} \gets \max(\beta_2 * \vec u_{k(i-1)}, |\vec d_{ki}|)$;
			\State $\vec t_{ki} \gets \vec t_{k(i-1)} - \alpha/\sqrt{ki}(1-\beta_1^i) * \vec m_{ki}/\vec u_{ki}$;
			\State $i \gets i+1$
			\EndWhile
			\State $\vec t_{(k+1)0} \gets \vec t_{ki}$
			\While{$j < j_{\max}$} \Comment{Perform simulated annealing}
			\State $M \gets$ random neighbour of $M_{k(j-1)}$ in $G$
			\State $\Delta = f(M,\lambda_{k-1},\vec t_{ki}) - f(M_{k(j-1)}, \lambda_{k-1}, \vec t_{ki})$
			\State $T \gets 1 - j/j_{max}$
			\If{$\exp(\sqrt{k}\Delta/\sigma T)< U \sim U[0,1]$}
			\State $M_{kj} \gets M$
			\Else
			\State $M_{kj} \gets M_{k(j-1)}$
			\EndIf
			\State $j \gets j+1$
			\EndWhile
			\State $M_{(k+1)0} \gets M_{kj}$
			\State $\lambda_k \gets \argmin_{\lambda} \, f(M_{kj}, \cdot, \vec t_{ki})$\Comment{Solve optimal transport linear program}
			\If{$|f(M_{(k-1)j}, \lambda_{k-1},\vec t_{(k-1)i}) - f(M_{kj}, \lambda_k,\vec t_{ki})| < \epsilon$}
			\State \Return $f(M_{kj}, \lambda_k,\vec t_{ki})$
			\EndIf
			\EndWhile
			\Return $f(M_k, \lambda_k,\vec t_k)$
		\end{algorithmic}
	\end{algorithm}
	
	\subsection{Simulated Data}
	
	To visualise the results of \Cref{algo:minimisation} and compare distances between distributions of different types, we consider simulated datasets of sample size 50 taken from branching tree and coalescent tree distributions on $\mathcal{T}_4 \subset \TPT{6}, \mathcal{T}_5 \subset \TPT{10}, \mathcal{T}_6 \subset \TPT{15}$, as well as standard Gaussian distributions on $\R^{6}, \R^{10}, \R^{15}$. These distributions are all coordinate invariant, and so symmetric about the origin of the tropical projective torus. We therefore normalise these datasets by the average tropical norm of their data points to remove effects of the dimensionality on the scale of the data.
	
	We then run \Cref{algo:minimisation} for $k_{\max}=40$ steps, and run simulated annealing and TrAdamax for $i_{\max}, j_{\max} = 50$ sub-steps for each $k$ to compute the Wasserstein distances between branching process, coalescent, and Gaussian data in $\TPT{6}, \TPT{10}, \TPT{15}$. For each dataset pair, we run \Cref{algo:minimisation} from 10 random initialisations and take the minimal value over all initialisations. \Cref{fig:sim_data_heatmap} shows the Wasserstein distances between these datasets as a matrix heatmap. We note from \Cref{fig:sim_data_heatmap} that the datasets from coalescent processes have a greater Wasserstein distance to other datasets, even to other coalescent datasets.
	
	\begin{figure}[ht!]
		\centering
		\includegraphics[width = .7\textwidth]{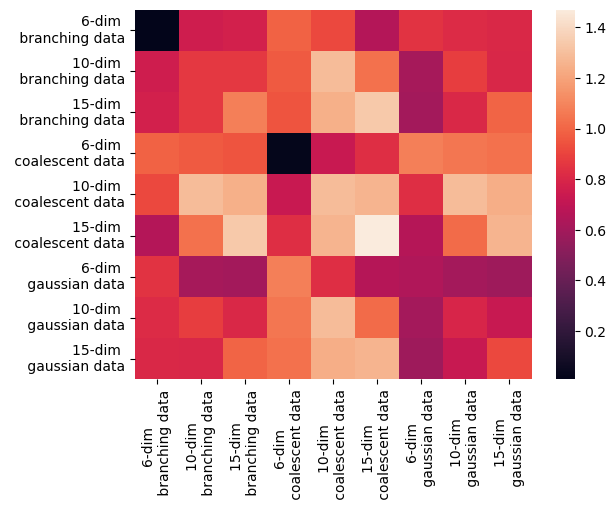}
		\caption{The computed Wasserstein distance between branching, coalescent and Gaussian data of dimension $6, 10$ and $15$. We take a minimum over 10 initialisations of \Cref{algo:minimisation}, and use the tuned learning rate and scale from \Cref{tab:lr_tuning_averages,tab:scale_tuning_averages}.}
		\label{fig:sim_data_heatmap}
	\end{figure}
	
	\subsection{Computation for Influenza Data}
	
	To conclude the computational section of this paper, we use \Cref{algo:minimisation} to compute Wasserstein distances between real phylogenetic datasets which are supported on different datasets.
	
	We use the pre-processed phylogenetic influenza datasets presented by \cite{monod2018StatPerspective}; this phylogenetic data is computed from the genetic sequences of the influenza A H3N2 hemagglutinin obtained for New York, 1993-2017, from the GISAID EpiFlu\texttrademark database \citep{elbe2017data}. The raw genomic data was then aligned using MUSCLE \citep{edgar2004muscle} with default settings, to construct a single large phylogenetic tree for the influenza strains across all seasons. Finally, tree dimensionality reduction \citep{zairis2016genomic} is applied to represent this large phylogenetic tree as a dataset of smaller trees using temporal windows of 5 years; this produces 21 datasets of approximately 20,000 trees each, where the leaf set of a given dataset corresponds to 5 consecutive years. For further details on data pre-processing, see \cite{monod2018StatPerspective}.
	
	From each dataset, we sub-sample 50 tree data points to reduce the computational load on the optimal transport calculations. We use the R package APE~\citep{paradis2004ape} to compute the pairwise leaf distances from the Newick tree format for each tree, to produce 21 datasets of 50 data points of dimension 10. We note that though the dimensionality of each dataset is the same, the dimensions for each dataset correspond to different pairs of years; there is therefore no natural correspondence between the dimensions of the 1993-1997 dataset and the 2013-2017 dataset.
	
	\Cref{fig:real_data_heatmap} shows the computed Wasserstein distances between the 21 datasets. From \Cref{fig:real_data_heatmap}, we observe that all datasets for temporal windows containing 2002 have relatively small distances to each other, while having larger distances to all other seasons. We note that this time window has significant overlap with the prominence of the A/Fujian/411/2002 virus strain, which led to an ineffective trivalent inactivated influenza vaccine in 2003 \citep{centers2004preliminary}. We also observe a small computed Wasserstein distance between the datasets of 1996-2000 and 2003-2007, which may reflect the mutation reassortment events which are discussed by \cite{holmes2005whole}.
	
	\begin{figure}[ht!]
		\centering
		\includegraphics[width = .7\textwidth]{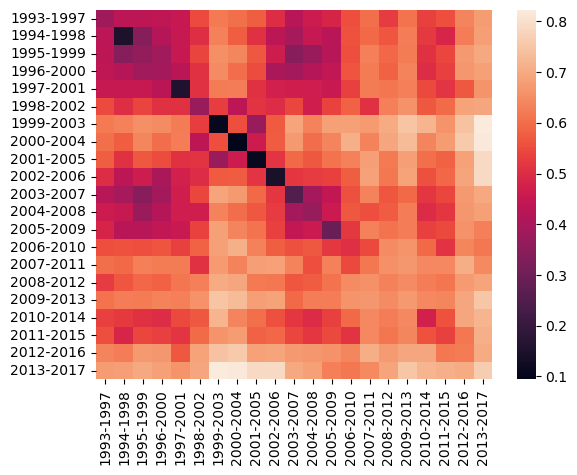}
		\caption{The computed Wasserstein distance between branching, coalescent and Gaussian data of dimension $6, 10$ and $15$. We take a minimum over 10 initialisations of \Cref{algo:minimisation}, and use the tuned learning rate and scale from \Cref{tab:lr_tuning_averages,tab:scale_tuning_averages}.}
		\label{fig:real_data_heatmap}
	\end{figure}
	
	\section{Discussion}
	\label{sec:discussion}
	
	In this manuscript, we have studied the problem of comparing probability distributions over tropical projective tori with different dimensions and formulated an inter-dimensional Wasserstein distance for this purpose.  Our construction was largely inspired by a recent solution to the same problem in Euclidean spaces.  We highlighted the key components of semi-orthogonal maps used in the Euclidean setting and identified tropical matrices with the same properties to be able to arrive at the same conclusion on tropical projective tori. Specifically, we studied tropical simple projections; only the basic properties of non-expansivity, surjectivity, and fibres of consistent dimension were required in order to achieve the same result as in the Euclidean setting, even though the tropical projective torus can be seen as comparatively more restrictive (e.g., it is not a Hilbert space under the tropical norm \citep{amendola2021invitation}).
	
	Considering our motivating application of phylogenetic statistics, it would be natural to build on this work by a consideration of how simple projections act on tree space; which simple projections map tree space to tree space? Does it suffice to consider such maps when finding the optimal map between tree datasets? Can a simple projection which maps tree space to tree space be described by operations on leaves? These questions, which we discussed in more detail in \Cref{subsec:open_problems} present the opportunity to improve the speed of calculations through a restricted search of simple projections, as well as gaining more interpretability from our calculations on the leaf level.
	
	While our computational method in \Cref{algo:minimisation} is a natural heuristic approach and easy to implement, further research into more sophisticated optimisation tools for calculating tropical projection Wasserstein distances would significantly improve the applicability of our distance. Currently, we lack confidence bounds on our computations in \Cref{sec:computation}, and we can see immediately that our computations are not necessarily accurate; in \Cref{fig:sim_data_heatmap}, we see datasets whose distance to themselves is not zero. This may be a result of simulated annealing never reaching the optimal support as it is more common for datasets of higher dimension, but without confidence bounds our distances lack interpretability. 
	
	Finally, while tropical projection Wasserstein distances may be a useful tool for certain statistical methods such as clustering, utilising these distances for more refined model selection methods would require an understanding of their behaviour for standard tree models such as branching and coalescent processes. The computed values shown in \Cref{fig:sim_data_heatmap} do not suggest a clear pattern in the distances between datasets of the same distribution but different dimensionality, but there are many avenues for further numerical experiments. By computing distances between several samples from the same distribution, we may see some indication as to which distributions are well-clustered under the tropical projection Wasserstein distance and hence can be detected for model selection using this distance. 
	
	\paragraph{Data Availability}
	
	The code and data used in this article are publicly available in the GitHub repository at \url{https://github.com/Roroast/WDDD}.
	
	\section*{Acknowledgments}
	
	The authors wish to thank Yue Ren and Felipe Rinc\'{o}n for helpful discussions.  
	
	Y.C.~is funded by a President's PhD Scholarship at Imperial College London.  D.T.'s PhD scholarship is funded by the IGSSE/TUM-GS via a Technical University of Munich–-Imperial College London Joint Academy of Doctoral Studies (JADS) award (2021 cohort, PIs Drton/Monod), from which R.T.~also receives partial support.
	
	\begin{appendices}
		
		\crefalias{section}{appendix}
		
		\section{Proof}\label{appsec:proofs}
		
		\begin{proof}[Proof of \Cref{lemma:optimser}]
			Consider a simple projection matrix $M$, and let $M'$ be a tropical matrix map with exactly one real entry per column such that $M_{ij} \in \R \Rightarrow M'_{ij} = 0$. We begin by showing that on any ball $B_0(r)$ in $\TPT{n}$, there is some $\mathbf{t} \in \TPT{n}$ such that $M(\mathbf{x}) = M'(\mathbf{x+t})$.
			
			For each $j \in \cup_i J_i$, let $i_j$ be the unique $i$ such that $j \in J_i$. Then we can define a vector $\mathbf{t} \in \overline\R^{n}$ by
			\[
			\mathbf{t}_j = \begin{cases}
				-2r + \min\{ M_{ij}: M_{ij} \neq -\infty \}      &\text{if }j \notin  \cup_i J_i \\
				M_{i_jj}        &\text{otherwise.}
			\end{cases}
			\]
			We now show that for $\mathbf{x} \in B_0(r)$, we have $M\mathbf{x} = M'(\mathbf{x+t})$. Let $J'_i$ be the support sets for $M'$. By definition, the sets $J_i'$ are disjoint, partition $[n]$ exactly, and $J_i \subseteq J_i'$.
			\begin{align*}
				M'(\mathbf{x+t}) &= \max_{j \in J'_i} \{ M'_{ij} + x_j+t_j \} \\
				&= \max \{ \max_{j \in J_i} \{ x_j+M_{ij} \}, \max_{j \in J'_i \setminus J_i} x_j + -2r + \min\{ M_{ij}: M_{ij} \neq -\infty \} \} \\
				&= \max_{j \in J_i} \{ x_j+M_{ij} \} \\
				&= M(\mathbf{x})
			\end{align*}
			where the penultimate equality comes from $\|x\|_{tr} \leq r$.\\
			Now, denoting the empirical measure of $X$ as $\nu_X$, our set of projected measures becomes:
			\begin{equation*}
				\Phi^-(\nu_X,m) = \left\{ \frac{1}{R}\sum_{r \leq R} \delta_{M'(x^{(r)}+t)} : \mathbf{t} \in \TPT{n]}, M' \in \mathcal{M_{\tr}'} \right\}.
			\end{equation*}
			Finally, we express the Wasserstein distance $W_{\mathrm{tr},p}^{m,n}$ by
			\begin{align*}
				W_{\mathrm{tr},p}^{m,n}(\mu,\nu) &= \inf_{\mathbf{t}, M'} \inf_{\lambda} \left(\sum_{r,s} \lambda_{r,s} d_{tr}( M'( \mathbf{x^{(r)}} + \mathbf{t}), \mathbf{y})^p \right)^{1/p}\\
				&= \inf_{\mathbf{t}, M'} \inf_{\lambda} \left(\sum_{r,s} \lambda_{r,s} \| M'( \mathbf{x^{(r)}} + \mathbf{t}) - \mathbf{y}\|_{tr}^p \right)^{1/p}\\
				&= \inf_{\mathbf{t}, M'} \inf_{\lambda} f(M',\lambda, \vec t)\\
			\end{align*}
			where the constraints on $\mathbf{t},M'$ and $\lambda$ are as described.
		\end{proof}
		
		\section{Gradient Methods}\label{appsec:gradient_methods}
		
		These experiments are aimed at the computation of the 2-Wasserstein projected distance $W_{\mathrm{tr},2}^{m,n}$, and are restricted to samples $X,Y$ of equal sizes $R=S$ to reduce the computational complexity of optimal transport calculations.
		
		\begin{sidewaystable}[ht!]
			\centering
			\begin{tabular}{|c|c|c|r|r|r|r|r|r|r|r|r|r|r|}
				\hline
				&   &   & \multicolumn{11}{|c|}{Learning Rate} \\
				\hline
				$n$ & $m$ & Grad &  $e^{-6}$  &  $e^{-5}$  &  $e^{-4}$  &  $e^{-3}$  &  $e^{-2}$  &  $e^{-1}$  &  $e^{0}$  &  $e^{1}$  &  $e^{2}$  &  $e^{3}$ & $e^{4}$ \\
				\hline
				6  & 6  & CA &      2.516 &      1.927 &      1.093 &      1.086 &      1.086 &      1.088 &      1.100 &      1.173 &      1.389 &      1.895 &      3.017 \\
				&    & CD &      2.738 &      2.506 &      1.998 &      1.258 &      1.086 &      1.085 &      1.085 &      1.086 &      1.087 &      1.129 &      1.765 \\
				&    & TA &      2.699 &      2.388 &      1.638 &      1.084 &      1.086 &      1.087 &      1.090 &      1.111 &      1.173 &      1.364 &      2.521 \\
				&    & TD &      2.722 &      2.446 &      1.809 &      1.142 &      1.086 &      1.085 &      1.086 &      1.086 &      1.090 &      1.182 &      1.675 \\
				\hline
				10 & 6  & CA &      2.319 &      1.745 &      0.968 &      0.958 &      0.952 &      0.956 &      0.961 &      1.075 &      1.317 &      1.840 &      2.767 \\
				&    & CD &      2.545 &      2.320 &      1.843 &      1.228 &      0.982 &      0.971 &      0.962 &      0.958 &      0.951 &      1.026 &      1.845 \\
				&    & TA &      2.501 &      2.189 &      1.484 &      0.956 &      0.952 &      0.946 &      0.941 &      0.972 &      1.093 &      1.476 &      2.747 \\
				&    & TD &      2.530 &      2.268 &      1.655 &      1.019 &      0.963 &      0.960 &      0.957 &      0.947 &      0.939 &      1.062 &      1.726 \\
				\hline
				& 10 & CA &      2.778 &      2.199 &      1.210 &      1.195 &      1.195 &      1.201 &      1.221 &      1.313 &      1.496 &      2.003 &      3.062 \\
				&    & CD &      3.020 &      2.819 &      2.366 &      1.665 &      1.209 &      1.197 &      1.195 &      1.196 &      1.198 &      1.226 &      1.916 \\
				&    & TA &      2.963 &      2.651 &      1.929 &      1.198 &      1.195 &      1.196 &      1.204 &      1.241 &      1.320 &      1.810 &      3.415 \\
				&    & TD &      3.000 &      2.752 &      2.148 &      1.334 &      1.199 &      1.197 &      1.194 &      1.196 &      1.201 &      1.281 &      1.595 \\
				\hline
				15 & 6  & CA &      2.033 &      1.460 &      0.895 &      0.879 &      0.868 &      0.866 &      0.888 &      1.023 &      1.288 &      1.710 &      3.193 \\
				&    & CD &      2.276 &      2.081 &      1.675 &      1.081 &      0.915 &      0.903 &      0.894 &      0.884 &      0.875 &      0.973 &      1.890 \\
				&    & TA &      2.217 &      1.904 &      1.214 &      0.872 &      0.859 &      0.855 &      0.857 &      0.908 &      1.051 &      1.439 &      2.400 \\
				&    & TD &      2.255 &      2.011 &      1.459 &      0.942 &      0.891 &      0.879 &      0.869 &      0.855 &      0.855 &      1.004 &      1.794 \\
				\hline
				& 10 & CA &      2.928 &      2.303 &      1.096 &      1.080 &      1.076 &      1.074 &      1.107 &      1.238 &      1.433 &      2.014 &      3.324 \\
				&    & CD &      3.157 &      2.916 &      2.395 &      1.663 &      1.111 &      1.093 &      1.090 &      1.086 &      1.081 &      1.115 &      1.780 \\
				&    & TA &      3.118 &      2.794 &      1.982 &      1.074 &      1.069 &      1.066 &      1.073 &      1.124 &      1.282 &      1.888 &      3.435 \\
				&    & TD &      3.135 &      2.847 &      2.132 &      1.211 &      1.080 &      1.075 &      1.073 &      1.066 &      1.070 &      1.197 &      1.791 \\
				\hline
				& 15 & CA &      3.291 &      2.649 &      1.255 &      1.244 &      1.244 &      1.250 &      1.274 &      1.385 &      1.640 &      2.130 &      3.789 \\
				&    & CD &      3.529 &      3.301 &      2.800 &      2.017 &      1.300 &      1.249 &      1.244 &      1.242 &      1.244 &      1.270 &      1.620 \\
				&    & TA &      3.484 &      3.155 &      2.306 &      1.245 &      1.243 &      1.245 &      1.256 &      1.306 &      1.400 &      2.182 &      3.993 \\
				&    & TD &      3.505 &      3.216 &      2.487 &      1.435 &      1.252 &      1.246 &      1.244 &      1.244 &      1.251 &      1.318 &      1.732 \\
				\hline
				\hline
				\multicolumn{2}{|c|}{Average} & CA   &      2.644 &      2.047 &      1.086 &      1.074 &      \textbf{1.070} &      1.073 &      1.092 &      1.201 &      1.427 &      1.932 &      3.192 \\
				\multicolumn{2}{|c|}{} 		& CD   &      2.878 &      2.657 &      2.179 &      1.485 &      1.101 &      1.083 &      1.078 &      1.075 &      \textbf{1.073} &      1.123 &      1.803 \\
				\multicolumn{2}{|c|}{} 		& TA   &      2.830 &      2.514 &      1.759 &      1.072 &      1.067 &      \textbf{1.066} &      1.070 &      1.110 &      1.220 &      1.693 &      3.085 \\
				\multicolumn{2}{|c|}{} 		& TD   &      2.858 &      2.590 &      1.948 &      1.180 &      1.078 &      1.074 &      1.071 &      \textbf{1.066} &      1.068 &      1.174 &      1.719 \\
				\hline
			\end{tabular}
			\caption{The mean loss over 10 initialisations and 9 sample distributions achieved by classical Adam, classical descent, TrAdamax, and tropical descent for data of dimensionality $m \leq n \in \{ 6, 10, 15 \}$. Learning rate is varied on a linear log scale from $e^{-6}, e^{4}$.}
			\label{tab:lr_tuning}
		\end{sidewaystable}
		
		We test the gradient methods CD, TD, CA, TA in the following way; for each dimension pair $m \leq n \in \{6, 10,15 \} $, we have 9 dataset pairs $(X,Y)$ where $X,Y$ have been taken from a branching, coalescent or Gaussian distribution. We then run \Cref{algo:minimisation} with 10 random initialisations for $\texttt{max\_iters} = 40$. Each iteration of \Cref{algo:minimisation} involves 50 steps of the specified gradient method -- we do not perform any $\lambda, M$ optimisation to minimise the random noise present. We do this for each dataset and gradient method for learning rates on a log scale between $e^{-6}, e^{4}$. \Cref{tab:lr_tuning} shows the mean loss achieved over 10 randomised initialisations and all 9 dataset pairs for each dimension pair and gradient settings.
		
		We see that when $n=m$, classical and tropical methods behave similarly; their optimal mean loss is the same, though this minimum is achieved for different learning rates. However, when $m < n$ the tropical methods consistently achieve a smaller mean loss. \Cref{tab:lr_tuning} also shows the mean loss averaged over all dimension pairs; the globally optimum gradient settings are TA, $\alpha=0.3679$.
		
		\section{Simulated Annealing}\label{appsec:simulated_annealing}
		
		\begin{sidewaystable}[h!]
			\centering
			\begin{tabular}{|l|l|l|r|r|r|r|r|r|r|r|r|r|}
				\hline
				&   &   & \multicolumn{10}{|c|}{Scale, $\sigma$} \\
				\hline
				$n$ & $m$ & Graph &  $e^{-7}$  &  $e^{-6}$  &  $e^{-5}$  &  $e^{-4}$  &  $e^{-3}$  &  $e^{-2}$  &  $e^{-1}$  &  $e^{0}$  &  $e^{1}$  &  $e^{2}$ \\
				\hline
				6  & 6  & K &     1.087 &     1.083 &     1.075 &     1.079 &     1.087 &     1.080 &     1.057 &     1.044 &     1.509 &     1.593 \\
				&    & G &     1.076 &     1.076 &     1.076 &     1.076 &     1.075 &     1.062 &     1.042 &     1.122 &     1.431 &     1.640 \\
				\hline
				10 & 6  & K &     0.946 &     0.946 &     0.946 &     0.946 &     0.946 &     0.943 &     0.918 &     0.986 &     1.487 &     1.561 \\
				&    & G &     0.925 &     0.926 &     0.925 &     0.925 &     0.910 &     0.890 &     0.898 &     1.034 &     1.349 &     1.505 \\
				\hline
				& 10 & K &     1.196 &     1.196 &     1.196 &     1.196 &     1.196 &     1.194 &     1.195 &     1.320 &     1.666 &     1.700 \\
				&    & G &     1.188 &     1.186 &     1.185 &     1.187 &     1.183 &     1.160 &     1.199 &     1.602 &     1.669 &     1.716 \\
				\hline
				15 & 6  & K &     0.855 &     0.855 &     0.855 &     0.855 &     0.855 &     0.852 &     0.848 &     0.936 &     1.395 &     1.438 \\
				&    & G &     0.825 &     0.826 &     0.824 &     0.818 &     0.818 &     0.816 &     0.839 &     1.070 &     1.287 &     1.369 \\
				\hline
				& 10 & K &     1.066 &     1.066 &     1.066 &     1.066 &     1.066 &     1.064 &     1.059 &     1.162 &     1.616 &     1.643 \\
				&    & G &     1.064 &     1.064 &     1.066 &     1.064 &     1.058 &     1.047 &     1.136 &     1.486 &     1.542 &     1.585 \\
				\hline
				& 15 & K &     1.245 &     1.245 &     1.245 &     1.245 &     1.245 &     1.242 &     1.235 &     1.442 &     1.686 &     1.701 \\
				&    & G &     1.242 &     1.241 &     1.241 &     1.239 &     1.221 &     1.221 &     1.492 &     1.660 &     1.690 &     1.706 \\
				\hline
				\hline
				\multicolumn{2}{|c|}{Averages}& K   &     1.066 &     1.065 &     1.064 &     1.064 &     1.066 &     1.062 &     1.052 &     1.148 &     1.560 &     1.606 \\
				\multicolumn{2}{|c|}{}& G &     1.053 &     1.053 &     1.053 &     1.052 &     1.044 &     1.033 &     1.101 &     1.329 &     1.495 &     1.587 \\
				\hline
			\end{tabular}
			\caption{The mean loss over 10 initialisations and 9 sample distributions achieved by simulated annealing on the incomplete and complete graph structures for data of dimensionality $m \leq n \in \{ 6, 10, 15 \}$. The scale factor is varied on a linear log scale from $e^{-7}, e^{2}$.}
			\label{tab:scale_tuning}
		\end{sidewaystable}
		
		As for the training of gradient methods for $\vec t$ optimisation, we run experiments for the 54 pairs of datasets $X,Y$. \Cref{algo:minimisation} is run with 10 random initialisations for $\texttt{max\_iters} = 40$ and for each $k$,  $\vec t$ optimisation is run for 40 steps using TrAdamax and $\alpha = 0.3679$. For each $k$ we perform 50 steps of simulated annealing, over which the scale factor decreases linearly from $T = \sigma/\sqrt{k}$ to $\sigma/50\sqrt{k}$. We perform this computation for the incomplete graph structure described above as well as the complete graph of supports, and for scale factors on a log scale between $e^{-6}, e^2$. The mean loss are then shown in \Cref{tab:scale_tuning}.
		
		We see that for data of any dimensionality, simulated annealing on the incomplete graph $G$ outperforms simulated annealing on the complete graph $K$. We achieve the best mean loss when using the incomplete graph and the scale factor $\sigma = 0.135$.
		
		\section{Runtimes}\label{appsec:time}
		
		Experiments were implemented on a AMD EPYC 7742 node using a single core, 8 GB. The average runtime over all simulated dataset computations was 2.90s. The average runtime over all real dataset computations was 3.66s.

	\end{appendices}
	
	\clearpage
	
	
	\bibliographystyle{authordate3}
	
	\bibliography{Sources}
	
\end{document}